\newcommand{\Z}{\mathbb{Z}}
\newcommand{\R}{\mathbb{R}}
\newcommand{\C}{\mathbb{C}}
\renewcommand{\O}{\mathcal{O}}
\DeclareMathOperator{\Iso}{Iso}
\DeclareMathOperator{\rk}{rk}
\DeclareMathOperator{\gr}{gr}
\newcommand{\co}{\colon\thinspace}
\theoremstyle{plain}
\newtheorem{theorem}{Theorem}[section]
\newtheorem{lemma}[theorem]{Lemma}
\newtheorem{corollary}[theorem]{Corollary}
\newtheorem{proposition}[theorem]{Proposition}
\theoremstyle{definition}
\newtheorem{definition}[theorem]{Definition}
\newtheorem{example}[theorem]{Example}
\theoremstyle{remark}
\newtheorem{remark}[theorem]{Remark}
\newtheorem*{remark*}{Remark}
\newcommand*{\reg}{\text{reg}} 
\newenvironment{enumiii}{\begin{enumerate}[(i)]}{\end{enumerate}}
\title{Suborbifolds, quotients and transversality}
\author{Martin Weilandt}
\address{Universidade Federal de Santa Catarina, Departamento de Matem\'atica, Campus Universit\'ario Trindade, Florian\'opolis-SC, 88040-900, Brazil}
\email{martin.weilandt@ufsc.br}
\date{\today}
\keywords{orbifold; suborbifold; embedding; transversality}
\subjclass[2010]{57R18 (Primary); 58E40 (Secondary)}
\numberwithin{equation}{section}
\begin{document}
\begin{abstract}
  Inspired by work of Borzellino and Brunsden, we generalize the notion of a submanifold identifying a natural and sufficiently general condition which guarantees that a subset of an (effective) orbifold carries itself a canonical induced orbifold structure. We illustrate the strength of this approach generalizing typical constructions of submanifolds to the orbifold setting using embeddings, proper group actions and the idea of transversality.
\end{abstract}
\date{\today}
\maketitle

\section{Introduction}
An orbifold is a topological space equipped with an atlas which locally provides homeomorphisms to quotients of some smooth manifold by a finite group of diffeomorphisms. Even though they provide a rather natural generalization of manifolds, there is no consensus on how to generalize the notion of a submanifold to the orbifold setting. After some remarks on group actions in Section \ref{sec:inv-sub} we give such a generalization, based on the idea of a ``saturated suborbifold'' in \cite{zbMATH05546218}, in Section \ref{sec:suborbifolds}, verify that a suborbifold is in fact an orbifold in a canonical manner and give some natural examples and constructions of suborbifolds. We also introduce the special cases of full and embedded suborbifolds, which have already been considered in \cite{zbMATH06513434} using somewhat different characterizations. In Section \ref{sec:quotients} we consider a quotient $M/G$ of a manifold $M$ under a proper almost free action of a Lie group $G$ and give conditions under which a submanifold $N\subset M$ and a subgroup $H\subset G$ define a suborbifold $N/H\subset M/G$.
In Section \ref{sec:alt} we provide alternative characterizations of full and embedded suborbifolds to clarify the relation between the terms in this paper and the suborbifold types from \cite{zbMATH06513434}. 
In Section \ref{sec:transversality} we define the notion of transverse suborbifolds and generalize classical results on transverse submanifolds and  maps to the setting of suborbifolds.
  
All our manifolds are Hausdorff, second countable, smooth without boundary and of (constant) finite dimension (though not necessarily connected). By a submanifold we always mean an embedded submanifold. Note that even though we frequently refer to \cite{zbMATH06513434}, we do not assume familiarity with that reference (or any other earlier works on ``suborbifolds'').

\section{Invariant submanifolds}
\label{sec:inv-sub}
Before coming to the definition of a suborbifold, we have to consider certain invariant submanifolds in the context of group actions.
Recall that given a Lie group $G$, a $G$-manifold is a manifold equipped with a smooth (left-) $G$-action. The following two definitions are central for our theory. (Although the conditions below are common in works on group actions, we could not find any name for them in this general form.)

The condition on the orbits in the following definition has already been used in  \cite[Definition 7]{zbMATH06513434} to define the notion of a ``saturated suborbifold''.

\begin{definition}
  \label{def:H-sub}
  Let $H$ be a closed subgroup of Lie group $G$ and let $M$ be a $G$-manifold.
  An \emph{$H$-submanifold} of $M$ is a submanifold $N$ of $M$ such that $N\cap Gx=Hx$ for every $x\in N$.

\end{definition}

\begin{remark}
  \label{remark:sat}
  The condition $N\cap Gx=Hx~\forall x\in N$ in the definition above is easily seen to be equivalent to the following characterization: $N$ is $H$-invariant and if $g\in G$ and $x\in N$ such that $gx\in N$, then there is $h\in H$ such that $hx=gx$.
\end{remark}

The following definition is closely related to the definition of a ``full suborbifold'' in \cite[Definition 5]{zbMATH06513434} (compare Lemma \ref{lemma:stab}).

\begin{definition}
  \label{def:full}
  Let $H$ be a closed subgroup of a Lie group $G$ and let $M$ be a $G$-manifold.
  A \emph{full $H$-submanifold} of $M$ is an $H$-invariant submanifold $N$ of $M$ with the following property: if $g\in G$ and $x\in N$ such that $gx\in N$, then $g\in H$.
\end{definition}

Of course, every full $H$-submanifold is an $H$-submanifold by Remark \ref{remark:sat}. Also note that if $G$ acts freely, then the two notions agree.

We will need the following basic observation about full $H$-submanifolds.

\begin{lemma}
  \label{lemma:stab}
  Let $H$ be a closed subgroup of a Lie group $G$, let $M$ be a $G$-manifold and let $N$ be an $H$-submanifold of $M$. Then $N$ is a full $H$-submanifold if and only if for every $x\in N$ the stabilizers $H_x$ and $G_x$ agree.
\end{lemma}

\begin{proof}
  First assume that $N$ is a full $H$-submanifold and let $x\in N$ and $g\in G_x$. Since $gx=x\in N$, we have $g\in H$. Hence $G_x\subset H$ and we conclude $H_x=H\cap G_x=G_x$. Conversely, assume $H_x=G_x$ for every $x\in N$. If $g\in G$ and $x\in N$ such that $gx\in N$, then there is $h\in H$ such that $hx=gx$ and hence $g^{-1}h\in G_x=H_x$. In particular, $g\in H$. 
\end{proof}

If $N$ is an $H$-submanifold of a $G$-manifold $M$, then we would like to consider the quotient $N/H$ as a subspace of $M/G$ with the inclusion given by $\iota\co N/H\ni Hx\mapsto Gx\in M/G$. Even though $\iota$ is injective in this setting (as already pointed out in the proof of \cite[Theorem 1 (1)]{zbMATH06513434}), we need additional hypotheses to guarantee that it is a topological embedding.

\begin{lemma}
  \label{lemma:embedding}
  Let $H$ be a subgroup of a finite group $G$ and let $N$ be a closed $H$-submanifold of a $G$-manifold $M$. Then the map $\iota\co N/H\ni Hx\mapsto Gx\in M/G$ is a topological embedding.
\end{lemma}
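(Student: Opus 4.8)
The plan is to exhibit $\iota$ as a homeomorphism onto its image by comparing two quotient maps out of $N$ that turn out to have exactly the same fibers. Write $p\co N\to N/H$ and $q\co M\to M/G$ for the canonical projections, so that $q\circ j=\iota\circ p$, where $j\co N\hookrightarrow M$ is the inclusion. Injectivity of $\iota$ is already known (it follows from the $H$-submanifold condition via Remark \ref{remark:sat}), and continuity is immediate: since $p$ is a quotient map and $q\circ j=\iota\circ p$ is continuous, the universal property of the quotient topology makes $\iota$ continuous. What remains --- and what I expect to be the crux --- is to show that $\iota$ is a homeomorphism onto the subspace $\iota(N/H)=q(N)\subset M/G$.

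First I would record that, because $G$ is finite, the projection $q$ is a closed map: for any closed $C\subset M$ one has $q^{-1}(q(C))=\bigcup_{g\in G}gC$, a finite union of closed sets, so $q(C)$ is closed. Restricting a closed map to the closed subset $N$ keeps it closed, so the corestriction $q|_N\co N\to q(N)$ is a continuous closed surjection onto $q(N)$ with its subspace topology; in particular $q|_N$ is a quotient map (and $q(N)$ is closed in $M/G$, so the image of $\iota$ is a closed subspace).

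The key observation is then that $p$ and $q|_N$ have identical fibers. Two points $x,y\in N$ satisfy $p(x)=p(y)$ iff $y\in Hx$, while $q|_N(x)=q|_N(y)$ iff $y\in Gx\cap N$; the $H$-submanifold condition $Gx\cap N=Hx$ makes these two conditions coincide. Since $p$ and $q|_N$ are quotient maps of $N$ with the same fibers, the induced bijection between their quotient spaces --- which is exactly $\iota\co N/H\to q(N)$ --- is a homeomorphism, completing the argument.

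The main obstacle is the middle step: upgrading $q|_N$ from a mere continuous surjection to a quotient map onto its image. This is precisely where the two standing hypotheses enter --- finiteness of $G$ to make $q$ closed, and closedness of $N$ in $M$ so that the restriction remains closed. Without closedness of $N$ the map $q|_N$ need not be a quotient map and $\iota$ could fail to be an embedding, so I would take care to invoke both hypotheses explicitly at this point.
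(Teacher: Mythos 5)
Your proof is correct, and it takes a genuinely different route from the paper's. The paper proves openness of $\iota$ onto its image directly: given an open $U\subset N/H$ and $x\in\pi_H^{-1}(U)$, it constructs an explicit open neighborhood $V^\prime$ of $x$ in $M$ --- using finiteness of $G$ to arrange $gV^\prime\subset V$ for every $g$ with $gx\in N$, and closedness of $N$ to arrange $gV^\prime\cap N=\emptyset$ for the remaining $g$ --- and then checks that $\pi_G(GV^\prime)\cap\iota(N/H)$ is an open subset of $\iota(N/H)$ squeezed between $\pi_G(x)$ and $\iota(U)$. You instead run a soft general-topology argument: finiteness of $G$ makes $\pi_G$ a closed map (since $\pi_G^{-1}(\pi_G(C))=\bigcup_{g\in G}gC$), closedness of $N$ makes the corestriction $\pi_G|_N\co N\to\pi_G(N)$ a closed continuous surjection and hence a quotient map, and the $H$-submanifold condition $N\cap Gx=Hx$ says exactly that $\pi_G|_N$ and $\pi_H$ have the same fibers, so the induced bijection $\iota$ between the two quotients is a homeomorphism. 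The two hypotheses enter at the same logical spots in both arguments, but your version is shorter, replaces the explicit neighborhood construction with standard facts about closed maps and quotient maps, and yields the additional observation that $\iota(N/H)$ is closed in $M/G$; the paper's construction is more hands-on and makes visible, point by point, which group elements force the shrinking of the neighborhood. Your closing remark that closedness of $N$ is genuinely needed matches the paper, which makes the same caveat immediately after the lemma.
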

\begin{proof}
  First note that $\iota$ is well-defined, since $H$ is a subgroup of $G$. To see that $\iota$ is injective, let $x,y\in N$ such that $Gx=Gy$ and observe that Definition \ref{def:H-sub} implies $Hx=N\cap Gx=N\cap Gy=Hy$. $\iota$ is easily seen to be continuous. To conclude that $\iota$ is a homeomorphism onto its image, denote the (open!) quotient maps by $\pi_G\co M \to M/G$ and $\pi_H: N \to N/H$ and let $U$ be open in $N/H$. We have to show that $\iota(U)=\pi_G(\pi_H^{-1}(U))$ is open in $\iota(N/H)$.

  Let $x$ be in $\pi_H^{-1}(U)$. We shall construct an open subset $W$ of $\iota(N/H)$ such that $\pi_G(x) \in W \subset \iota(U)$. Let $V$ be an open subset of $M$ such that $\pi_H^{-1}(U) = V \cap N$ and write $G_x^N=\{ g \in G;~ gx \in N\}$. Since $N$ is an $H$-submanifold of $M$, for each $g \in G_x^N$ we have $gx \in \pi_H^{-1}(U) \subset V$ and hence (since $G$ is finite) $x$ has an open neighborhood $V^\prime$ such that $g V^\prime \subset V$ for every $g \in G_x^N$. Since $N$ is closed and $G$ is finite, we can diminish $V^\prime$ if necessary to guarantee that $g V^\prime \cap N$ is empty for each $g\in G\setminus G_x^N$. Then the set
  \[W := \pi_G(GV^\prime \cap N) = \pi_G(GV^\prime) \cap \pi_G(N) = \pi_G(GV^\prime) \cap \iota(N/H)\]
  has the desired properties. 
\end{proof}

Even though $\iota$ in the lemma above need not be an embedding if $N$ is not assumed to be closed, in this case one can still construct a canonical embedding between appropriate quotients of open subsets of $N$ and $M$ by applying the following lemma.

\begin{lemma}
  \label{lemma:closed}
  Let $G$ be a finite group, let $H\subset G$ be a subgroup and let $N$ be an $H$-submanifold of a $G$-manifold $M$. Then for every $x\in N$ there is an open connected $G_x$-invariant neighborhood $U\subset M$ and an open connected $H_x$-invariant neighborhood $V\subset N$ such that $V$ is a closed $H_x$-submanifold of $U$.
\end{lemma}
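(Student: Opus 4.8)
The plan is to obtain $U$ and $V$ by shrinking a neighborhood of $x$ in stages: first to secure two local facts about the finite action, then to symmetrize and pass to connected components, and finally to check the defining orbit condition. Throughout I would use only that $G$ is finite (so $G_x$ and $H_x=H\cap G_x$ are finite), that $N$ is an embedded submanifold (hence locally closed), and the $H$-submanifold hypothesis $N\cap Gy=Hy$ from Definition \ref{def:H-sub}.

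First I would record two preliminary shrinkings. Since $N$ is embedded it is locally closed, so there is an open $U'\ni x$ with $N\cap U'$ closed in $U'$. Next, because $G$ is finite and $M$ is Hausdorff, for each $g\in G\setminus G_x$ we have $gx\neq x$, so we may separate $x$ from $gx$ and pull back to an open $W_g\ni x$ with $gW_g\cap W_g=\emptyset$; then $W:=\bigcap_{g\in G\setminus G_x}W_g$ is an open neighborhood of $x$ with the key property that $G_y\subseteq G_x$ for every $y\in W$. Setting $U_0:=U'\cap W$, I would symmetrize via $U_1:=\bigcap_{g\in G_x}gU_0$, which is $G_x$-invariant (reindex the intersection) while still satisfying $U_1\subseteq U'$ and $U_1\subseteq W$. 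Finally I would take $U$ to be the connected component of $x$ in $U_1$: manifolds are locally connected, so $U$ is open, and since every $g\in G_x$ is a homeomorphism of $U_1$ fixing $x$ it preserves this component, so $U$ is open, connected and $G_x$-invariant, with $N\cap U$ closed in $U$ and $G_y\subseteq G_x$ for all $y\in U$.

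For $V$ I would take the connected component of $x$ in $N\cap U$. As $N\cap U$ is open in $N$, the component $V$ is open in $N$; as $N\cap U$ is $H_x$-invariant (being the intersection of the $H$-invariant $N$ with the $G_x$-invariant, hence $H_x$-invariant, set $U$) and $H_x$ fixes $x$, the component $V$ is $H_x$-invariant and connected. Since a connected component is closed in its ambient set and $N\cap U$ is closed in $U$, the set $V$ is closed in $U$; being open in the submanifold $N$, it is itself a submanifold of the open set $U\subseteq M$. Thus $V$ is an open connected $H_x$-invariant neighborhood of $x$ in $N$, closed as a subset of the $G_x$-manifold $U$.

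It remains to verify the condition of Definition \ref{def:H-sub} for $V\subseteq U$ with respect to the $G_x$-action, namely $V\cap G_xy=H_xy$ for every $y\in V$, and this is the \emph{main point}. The inclusion $H_xy\subseteq V\cap G_xy$ is immediate from $H_x\subseteq G_x$ and the $H_x$-invariance of $V$. For the reverse inclusion take $z=gy\in V$ with $g\in G_x$; since $z\in N\cap Gy$, the $H$-submanifold hypothesis gives $z=hy$ for some $h\in H$. Then $h^{-1}g\in G_y\subseteq G_x$—this is exactly where $U\subseteq W$ is used—and combining with $g\in G_x$ forces $h\in G_x$, hence $h\in H\cap G_x=H_x$ and $z\in H_xy$. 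The main obstacle is precisely this last step: the hypothesis on $N$ controls only the full $G$-orbits, so upgrading it to the required identity for $G_x$-orbits is what compels the preliminary shrinking guaranteeing $G_y\subseteq G_x$ near $x$.
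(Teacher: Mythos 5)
Your proof is correct and follows essentially the same route as the paper's: shrink to a connected $G_x$-invariant neighborhood $U$ on which $N$ is relatively closed and elements of $G\setminus G_x$ displace $U$, verify the $H_x$-orbit condition from the $H$-submanifold hypothesis, and pass to the connected component of $x$ in $N\cap U$. The only cosmetic difference is in the final verification: the paper deduces $h\in G_x$ directly from $y,hy\in U$ and $U\cap hU=\emptyset$, whereas you route through the consequence $G_y\subseteq G_x$ and the identity $h^{-1}g\in G_y$ --- the same mechanism.
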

\begin{proof}
  Since $N\subset M$ is a submanifold, there is an open neighborhood $U\subset M$ of $x$ such that $U\cap N$ is closed in $U$. Diminishing $U$ if necessary, we can assume that $U$ is $G_x$-invariant, $U\cap gU=\emptyset$ for every $g\in G\setminus G_x$ and that $U$ is connected.

  Note that $U\cap N$ is an $H_x$-submanifold of the $G_x$-manifold $U$: Let $y\in U\cap N$ and $g\in G_x$ such that $gy\in U\cap N$. Since $N$ is an $H$-submanifold of $M$, there is $h\in H$ such that $gy=hy$. Since $y, hy\in U$, we have $h\in G_x$ and conclude that $h\in H\cap G_x=H_x$.

  Finally, let $V$ be the connected component of $x$ in $U\cap N$.
\end{proof}

Since our proof of Proposition \ref{proposition:suborbis} uses a geometrical argument, we will also need the following lemma. (See \cite[chapter 2]{zbMATH01626771} for the theory of intrinsic metric spaces.)
\begin{lemma}
  \label{lemma:metrics}
  Let $G$ be a finite group acting isometrically on a Riemannian manifold $(M,g)$, let $H$ be a subgroup of $G$, let $N$ be a closed $H$-submanifold of $M$ and let $g^\prime$ denote the Riemannian metric on $N$ given by the pull-back of $g$ via $N\hookrightarrow M$. Then the quotient metric on $N/H$ induced by $g^\prime$ and the intrinsic metric on $N/H$ induced by the quotient metric on $M/G$ (induced by $g$) coincide.
\end{lemma}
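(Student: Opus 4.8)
The plan is to show that the two metrics in the statement, which I denote by $d_1$ (the quotient metric on $N/H$ induced by $g^\prime$) and $d_2$ (the intrinsic metric of the subspace $\iota(N/H)\subset M/G$, where $\iota$ is the topological embedding from Lemma \ref{lemma:embedding}), induce one and the same length functional on curves; since $d_1$ is itself a length metric, this forces $d_1=d_2$. I write $d_M,d_N$ for the Riemannian distances of $g,g^\prime$ and $L_d$ for the length functional of a metric $d$, and I recall that, because $G$ and $H$ are finite and act by isometries, the quotient distances are $d_{M/G}(Gx,Gy)=\min_{g\in G}d_M(x,gy)$ and $d_1(Hx,Hy)=\min_{h\in H}d_N(x,hy)$.

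First I would record two elementary facts. The global inequality
\[d_{M/G}(\iota(Hx),\iota(Hy))=\min_{g\in G}d_M(x,gy)\le\min_{h\in H}d_N(x,hy)=d_1(Hx,Hy)\]
follows from $H\subset G$ together with $d_M\le d_N$ (every curve in $N$ is a curve in $M$); by monotonicity of the induced length metric this already yields $d_2\le d_1$. For the reverse direction the crucial local observation is that for $y$ sufficiently close to $x$ one has the \emph{exact} identities $d_{M/G}(Gx,Gy)=d_M(x,y)$ and $d_1(Hx,Hy)=d_N(x,y)$: any $g\in G_x$ is an isometry fixing $x$, so $d_M(x,gy)=d_M(x,y)$, while for $g\notin G_x$ the point $gy$ stays bounded away from $x$ as $y\to x$, and the same reasoning applies to $H$ acting on $N$.

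With these identities in hand I would compare the length functionals on a rectifiable curve $\bar c$ in $N/H$. Given a fine partition $t_0<\dots<t_n$, for each consecutive pair $p=\bar c(t_i)$, $q=\bar c(t_{i+1})$ I pick a lift $x_i\in N$ of $p$ and choose $x_{i+1}\in N$ realizing $d_1(p,q)=d_N(x_i,x_{i+1})$, so that $x_{i+1}$ is close to $x_i$ in $M$; the local identities then give $d_{M/G}(\iota p,\iota q)=d_M(x_i,x_{i+1})$. Using the classical comparison $d_N(x_i,x_{i+1})\le(1+\ve)\,d_M(x_i,x_{i+1})$ of intrinsic and extrinsic distance on a submanifold, valid uniformly once the partition is fine enough (Lebesgue number on the compact image of $\bar c$), I obtain
\[L_{d_{M/G}}(\iota\circ\bar c)\le L_{d_1}(\bar c)\le(1+\ve)\,L_{d_{M/G}}(\iota\circ\bar c)\]
for every $\ve>0$, hence $L_{d_1}(\bar c)=L_{d_{M/G}}(\iota\circ\bar c)$. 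Since $d_1$ is a length metric, being the quotient of the length space $(N,d_N)$ by the finite isometric $H$-action (see \cite[Chapter 2]{zbMATH01626771}), and $d_2$ is by definition the length metric associated with $d_{M/G}$ on $\iota(N/H)$, passing to the infimum over curves joining two fixed points yields $d_1=d_2$.

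The step I expect to require the most care is the reverse inequality $d_1\le d_2$, equivalently the lower bound $L_{d_1}(\bar c)\le L_{d_{M/G}}(\iota\circ\bar c)$: a priori a curve lying in $\iota(N/H)$ but measured in $M/G$ might exploit the identifications made by elements of $G_x\setminus H_x$ to become shorter than anything intrinsic to $N/H$. This is exactly what the local identity $d_{M/G}(Gx,Gy)=d_M(x,y)$ rules out, its proof resting on the observation that an element of $G_x$ fixes $x$ and acts isometrically, so it moves $y$ without changing its distance to $x$ and hence provides no infinitesimal shortcut. The remaining points — the uniformity of the radius in the local identities and of the submanifold comparison $d_N\le(1+\ve)\,d_M$ along the compact image of $\bar c$ — are routine.
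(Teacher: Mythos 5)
Your global strategy is essentially the paper's: both proofs reduce the statement to comparing, along curves, sums of quotient distances $\min_{g\in G}d_M(\,\cdot\,,g\,\cdot\,)$ with sums of distances intrinsic to $N$; your easy direction $d_2\le d_1$ is correct, as are the reduction via Lemma \ref{lemma:embedding} and the $(1+\ve)$-comparison of $d_N$ with $d_M$ on compact subsets of $N$. The genuine gap is exactly at the step you declare routine: the \emph{uniformity} of the radius in the local identity $d_{M/G}(Gx,Gy)=d_M(x,y)$. For fixed $x$ this identity holds when $d_M(x,y)<r(x)$ with $r(x)=\frac12\min\{d_M(x,gx):\,gx\neq x\}$, but $r(x)\to 0$ as $x$ approaches a fixed point of some element of $G$, so $r$ has no positive lower bound on compact sets meeting the singular locus --- which is the only interesting case of the lemma. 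Concretely, let $M=\R^2$, let $G=\{1,\sigma\}$ act by $\sigma(u,v)=(u,-v)$, let $H=\{1\}$ and $N=\{(u,cu):u\in\R\}$ with $c\neq 0$; this is a closed $H$-submanifold, since $\sigma x\in N$ forces $\sigma x=x$. For the (forced, since lifts are unique) chained lifts $x_i=(-b,-cb)$ and $x_{i+1}=(b,cb)$ one has $d_N(x_i,x_{i+1})=d_M(x_i,x_{i+1})=2b\sqrt{1+c^2}$, whereas $d_{M/G}(Gx_i,Gx_{i+1})=\min\bigl(2b\sqrt{1+c^2},2b\bigr)=2b$. Thus for partition pairs straddling the image of the origin, not only your identity but the combined termwise inequality $d_N(x_i,x_{i+1})\le(1+\ve)\,d_{M/G}(Gx_i,Gx_{i+1})$ that your chain of estimates needs fails by the fixed factor $\sqrt{1+c^2}$, no matter how fine the partition; the failure persists inside every compact neighborhood of the singular point.

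The lemma is of course still true in this example: inserting the image of the origin as an extra partition point restores the termwise identity, because elements of $G$ fixing one of the two partition points give no shortcut between them. Turning this observation into a proof is precisely the nontrivial content of the paper's argument, and it is what your proposal is missing: the paper assigns to each point $c(t)$ of a lifted curve a radius $\delta(t)$ with $B_{2\delta(t)}(c(t))\cap Gc(t)\setminus\{c(t)\}=\emptyset$ (this can be taken large at points fixed by many elements, but is forced to be tiny at nearby non-fixed points), uses compactness to build a partition adapted to these \emph{variable} radii, and then for each consecutive pair runs your estimate at whichever endpoint has the \emph{larger} radius: group elements outside that endpoint's stabilizer are excluded by the triangle inequality, and elements inside its stabilizer move nothing. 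So what you call ``routine uniformity'' is false as stated; replacing it by an adapted-partition argument is the heart of the proof rather than a remaining detail.
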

\begin{proof}
  Let $d=d_g$ denote the intrinsic metric (distance function) on $M$ induced by $g$ and let $e=e_{g^\prime}$ denote the intrinsic metric (distance function) on $N$ induced by $g^\prime$. Let $\bar{e}$ denote the quotient metric on $N/H$ induces by $e$ and let $\widehat{d}$ denote the intrinsic metric on $N/H$ induced by the quotient metric $\bar{d}$ on $M/G$.

  Let $Hx,Hy\in N/H$. Then
  \[\bar{e}(Hx,Hy)=\min_{h\in H}e(x,hy)=\inf_{c\in \mathcal{C}}\sup_{\mathcal{P}(c)}\sum_id(c(t_i),c(t_{i+1})),\]
  where $\mathcal{C}$ denotes the collection of continuous paths in $N$ from $x$ to some point in the orbit $Hy$ and $\mathcal{P}(c)$ denotes the collection of partitions $a\le t_0\le t_1\le\cdots\le t_k=b$ of the domain $[a,b]$ of the curve $c$.
  On the other hand (with $\pi\co N\to N/H$ denoting the quotient map),
  \[\widehat{d}(Hx,Hy)=\inf_{c\in \mathcal{C}}\sup_{\mathcal{P}(c)}\sum_i\bar{d}(\pi\circ c(t_i),\pi\circ c(t_{i+1}))
    =\inf_{c\in \mathcal{C}}\sup_{\mathcal{P}(c)}\sum_i\min_{g\in G}d(c(t_i), g c(t_{i+1})).\]
  To conclude that $\bar{e}(Hx,Hy)=\widehat{d}(Hx,Hy)$, fix a curve $c\co[a,b]\to N$ in $\mathcal{C}$. (If $\mathcal{C}$ is empty, then $\bar{e}(Hx,Hy)=\infty=\widehat{d}(Hx,Hy)$.) For each $t\in [a,b]$ there is $\delta(t)\in (0,\varepsilon)$ such that $B_{2\delta(t)}(c(t))\cap Gc(t)\setminus\{c(t)\}=\emptyset$ (where $B$ stands for an open ball in $M$ with respect to $d$). Since the image $c([a,b])$ is compact, there is a partition $a=t_0\le t_1\le\ldots\le t_k=b$ such that, with $J_i$ denoting the connected component of $B_{\delta(t_i)/2}(c(t_i))\cap c([a,b])$ containing $c(t_i)$, we have $\bigcup_i J_i=c([a,b])$. For each $0\le i\le k-1$ write $\delta_i:=\delta(t_i)$ and note that $d(c(t_i),c(t_{i+1}))<(\delta_i+\delta_{i+1})/2$. Now fix $i$ and first assume that $\delta_i\le\delta_{i+1}$. If $g\in G\setminus G_{c(t_{i+1})}$, then
  \begin{align*}
    d(c(t_i),gc(t_{i+1}))&\ge d(c(t_{i+1}),gc(t_{i+1}))-d(c(t_{i+1}),c(t_i))> 2\delta_{i+1}-(\delta_{i}+\delta_{i+1})/2\\
    &\ge(\delta_i+\delta_{i+1})/2>d(c(t_i),c(t_{i+1}))
  \end{align*}
  and hence $\min_{g\in G}d(c(t_i), g c(t_{i+1}))=d(c(t_i),c(t_{i+1}))$. If $\delta_i>\delta_{i+1}$, then an analogous calculation shows that $d(gc(t_i),c(t_{i+1}))>d(c(t_i),c(t_{i+1}))$ for every $g\in G\setminus G_{c(t_{i})}$ and hence
  \[\min_{g\in G}d(c(t_i), g c(t_{i+1}))=\min_{g\in G}d(gc(t_i),c(t_{i+1}))=d(c(t_i),c(t_{i+1}))\]
  in this case as well. We conclude that $\bar{e}(Hx,Hy)=\widehat{d}(Hx,Hy)$.
\end{proof}

\section{Suborbifolds}
\label{sec:suborbifolds}
Orbifolds have first been considered in \cite{zbMATH03123695} and we refer the reader to \cite{MR1950941, MR2012261, zbMATH01059650} for more results on the compatibility conditions used in this work. Let $X$ be a Hausdorff space with second countable topology. An $n$-dimensional \emph{orbifold chart} on $X$ is a quadruple $(U,\widetilde{U}/\Gamma,\pi)$ in which $U\subset X$ is open, $\Gamma$ is a finite group, $\widetilde{U}$ is an $n$-dimensional connected manifold with a fixed smooth effective $\Gamma$-action and $\pi\co\widetilde{U}\to U$ is a continuous $\Gamma$-invariant map such that the induced map $\overline{\pi}\co \widetilde{U}/\Gamma\to U$ is a homeomorphism. An \emph{injection} between two charts $(V,\widetilde{V}/\Delta,\phi)$, $(U,\widetilde{U}/\Gamma,\pi)$ with $V\subset U$ is a smooth embedding $\lambda\co\widetilde{V}\to\widetilde{U}$ such that $\pi\circ\lambda=\phi$. It is known that for each such injection $\lambda$ there is a unique map $\overline{\lambda}\co\Delta\to\Gamma$ such that $\lambda(\gamma x)=\overline{\lambda}(\gamma)\lambda(x)$, which turns out to be a monomorphism. Two orbifold charts $(U_i,\widetilde{U}_i/\Gamma_i,\pi_i)$, $i=1,2$, on $X$ are called \emph{compatible in $p$} if there is an orbifold chart $(V,\widetilde{V}/\Delta,\phi)$ such that $p\in V\subset U_1\cap U_2$ and, for each $i=1,2$, an injection $\lambda_i\co(V,\widetilde{V}/\Delta,\phi)\to(U_i,\widetilde{U}_i/\Gamma_i,\pi_i)$. An $n$-dimensional orbifold \emph{atlas} on $X$ is a collection $\{(U_\alpha,\widetilde{U}_\alpha/\Gamma_\alpha,\pi_\alpha)\}_\alpha$ of orbifold charts on $X$ such that $X=\bigcup_\alpha U_\alpha$ and whenever $U_\alpha\cap U_\beta\neq\emptyset$, then $\pi_\alpha$ and $\pi_\beta$ are compatible in every $p\in U_\alpha\cap U_\beta$. Since compatibility in a fixed point $p\in X$ defines an equivalence relation, it is easily seen that every orbifold atlas is contained in a unique maximal one. An (effective) \emph{orbifold} is by definition a pair $\mathcal{O}=(X,\mathcal{A})$ of a space $X$ and a maximal atlas $\mathcal{A}$. Just as in the manifold setting we also denote the so-called \emph{underlying space} $X$ simply by $\mathcal{O}$.
Given an orbifold $\mathcal{O}$, the \emph{isotropy} of $p\in\mathcal{O}$, denoted by $\Iso(p)$, is the (well-defined) isomorphism class of any stabilizer $\Gamma_{\tilde{p}}$ with $(U,\widetilde{U}/\Gamma,\pi)$ a chart such that $p\in U$ and $\tilde{p}\in\pi^{-1}(p)$. The \emph{regular part} $\mathcal{O}^\reg$ of an orbifold $\mathcal{O}$ is given by all $p\in\mathcal{O}$ such that $\Iso(p)$ is trivial and an orbifold $\mathcal{O}$ is canonically identified with a manifold if and only if $\mathcal{O}=\mathcal{O}^\reg$. Given two orbifolds $\mathcal{O}_1$, $\mathcal{O}_2$ equipped with maximal atlases $\{(U_\alpha,\widetilde{U}_\alpha/\Gamma_\alpha,\pi_\alpha)\}_\alpha$, $\{(V_\beta,\widetilde{V}_\beta/\Delta_\beta,\phi_\beta)\}_\beta$, respectively, the \emph{product orbifold} $\mathcal{O}_1\times\mathcal{O}_2$ is just the product of the underlying spaces equipped with the orbifold structure containing the atlas $\{(U_\alpha\times V_\beta,(\widetilde{U}_\alpha\times\widetilde{V}_\beta)/(\Gamma_\alpha\times\Delta_\beta),\pi_\alpha\times\phi_\beta)\}_{\alpha,\beta}$

We can now introduce the notion of a suborbifold. The definition below is inspired by \cite[Definition 2.13]{zbMATH05546218} and additional conditions in \cite{zbMATH06513434}. 
With respect to \cite{zbMATH06513434}, we note that locally our notion of a suborbifold corresponds to the idea of a ``saturated suborbifold'', our notion of a full suborbifold locally corresponds to the homonymous notion in \cite{zbMATH06513434} and our notion of an embedded suborbifold locally corresponds to the notion of a ``saturated'' and ``split'' suborbifold. We will make this relation more precise in Section \ref{sec:alt}. 

Note, however, that we do not assume in the definition that a suborbifold is an orbifold but instead establish this property in Proposition \ref{proposition:suborbis}. In Theorem \ref{theorem:embed-orbi} we will see that embedded suborbifolds as defined below correspond to images of orbifold embeddings in the sense of Definition \ref{def:immersion}.

\begin{definition}
  \label{def:suborbi}
  Let $\mathcal{O}$ be an orbifold, $0\le k\le\dim\mathcal{O}$ and $\mathcal{P}\subset\mathcal{O}$ a subset.
  \begin{enumiii}
  \item \label{def:suborbi:gen} $\mathcal{P}$ is a \emph{$k$-dimensional suborbifold} of $\mathcal{O}$ if for every $p\in \mathcal{P}$ there is an orbifold chart $(U,\widetilde{U}/\Gamma,\pi)$ of $\mathcal{O}$ with the property that there is a subgroup $\Delta\subset\Gamma$ and a $k$-dimensional $\Delta$-submanifold $\widetilde{V}\subset\widetilde{U}$ such that 
 $\pi(\widetilde{V})$ is an open neighborhood of $p$ in $\mathcal{P}$.
\item \label{def:suborbi:full} $\mathcal{P}$ is a \emph{full $k$-dimensional suborbifold} of $\mathcal{O}$ if for every $p\in \mathcal{P}$ there are $(U,\widetilde{U}/\Gamma,\pi)$, $\Delta$ and $\widetilde{V}$ as in (\ref{def:suborbi:gen}) such that $\widetilde{V}$ is a full $\Delta$-submanifold.
\item \label{def:suborbi:embedded} $\mathcal{P}$ is an \emph{embedded $k$-dimensional suborbifold} of $\mathcal{O}$ if for every $p\in \mathcal{P}$ there are $(U,\widetilde{U}/\Gamma,\pi)$, $\Delta$ and $\widetilde{V}$ as in (\ref{def:suborbi:gen}) such that $\widetilde{V}$ is connected and the action of $\Delta$ on $\widetilde{V}$ is effective.
  \end{enumiii}
\end{definition}

\begin{remark}
  Lemma \ref{lemma:closed} implies that we obtain an equivalent definition adding the condition that each $\widetilde{V}$ 
  above be connected and closed in $\widetilde{U}$. (It is straightforward to see that the additional conditions in (\ref{def:suborbi:full}) and (\ref{def:suborbi:embedded}) above are preserved by the construction based on that lemma.)
\end{remark}

Each of these three notions of a suborbifold will turn out to be relevant in a certain context and in Section \ref{sec:alt} we verify examples from \cite{zbMATH06513434} which show that these classes of suborbifolds are mutually disjoint. For the moment just note that if $\mathcal{O}$ happens to be a manifold, then all of the definitions above coincide and correspond to the usual notion of a submanifold. 

Before coming to concrete examples, we show the crucial result that every suborbifold is an orbifold in a canonical way.

\begin{proposition}
  \label{proposition:suborbis}
  Let $\mathcal{O}$ be an orbifold, $0\le k\le\dim\mathcal{O}$ and $\mathcal{P}\subset\mathcal{O}$ a subset.
  \begin{enumerate}[(i)]
  \item If $(U,\widetilde{U}/\Gamma,\pi)$ is a chart on $\mathcal{O}$ and $\widetilde{V}$ and $\Delta$ are as in Definition \ref{def:suborbi} (\ref{def:suborbi:gen}) with $\widetilde{V}$ connected and closed in $\widetilde{U}$, $K$ is the kernel of the $\Delta$-action on $\widetilde{V}$ and $\phi:=\pi_{|\widetilde{V}}\co \widetilde{V}\to V:=\pi(\widetilde{V})$, then $(V,\widetilde{V}/(\Delta/K),\phi)$ is a $k$-dimensional orbifold chart on the second countable Hausdorff space $\mathcal{P}$.
  \item \label{proposition:suborbis2} If $\mathcal{P}\subset\mathcal{O}$ is a $k$-dimensional suborbifold, then it carries a canonical
    $k$-dimensional orbifold structure. This structure contains every chart\newline $(V,\widetilde{V}/(\Delta/K),\phi)$ as defined in (i).
  \end{enumerate}
\end{proposition}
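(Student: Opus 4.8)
The plan is to verify part (i) directly—that each candidate quadruple $(V,\widetilde{V}/(\Delta/K),\phi)$ really is an orbifold chart on $\mathcal{P}$—and then assemble these charts into a compatible atlas in part (ii). For part (i), I would first check the topological hypotheses on $\mathcal{P}$: since $\mathcal{O}$ is Hausdorff and second countable and $\mathcal{P}$ carries the subspace topology, these properties pass to $\mathcal{P}$ immediately. The heart of part (i) is to verify the chart axioms. The group $\Delta/K$ is finite (as a quotient of the finite group $\Delta$) and acts effectively on $\widetilde{V}$ by construction of $K$ as the kernel. Since $\widetilde{V}$ is connected of dimension $k$, it remains to see that $\phi=\pi|_{\widetilde{V}}$ induces a homeomorphism $\widetilde{V}/(\Delta/K)\to V$. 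The map is clearly continuous and $\Delta$-invariant (hence $\Delta/K$-invariant), and surjects onto $V=\pi(\widetilde{V})$ by definition. For injectivity of the induced map, I would use the defining property of a $\Delta$-submanifold from Definition \ref{def:H-sub}: if $\pi(x)=\pi(y)$ with $x,y\in\widetilde{V}$, then $y\in\Gamma x\cap\widetilde{V}=\Delta x$, so $x$ and $y$ lie in the same $\Delta$-orbit and hence the same $(\Delta/K)$-orbit. That $V$ is open in $\mathcal{P}$ is exactly the hypothesis in Definition \ref{def:suborbi}(\ref{def:suborbi:gen}).

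The key obstacle, and the step requiring Lemma \ref{lemma:metrics}, is showing that $\phi$ is an \emph{open} map onto $V$ (so that the continuous bijection $\overline{\phi}$ is a homeomorphism). The natural approach is geometric: after passing to the connected closed $\Delta_{\tilde p}$-submanifold guaranteed by Lemma \ref{lemma:closed}, endow $\widetilde{U}$ with a $\Gamma$-invariant Riemannian metric (by averaging over the finite group $\Gamma$) and pull it back to $\widetilde{V}$. Lemma \ref{lemma:embedding} shows $\iota\co\widetilde{V}/\Delta\to\widetilde{U}/\Gamma$ is a topological embedding, which already gives that $V$ is homeomorphic to $\widetilde{V}/\Delta$; but to be sure the metric topologies agree with the quotient topologies I would invoke Lemma \ref{lemma:metrics} to identify the intrinsic metric on $\widetilde{V}/\Delta$ induced from downstairs with the quotient metric induced from the pulled-back metric upstairs. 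This pins down that the subspace topology $\mathcal{P}$ inherits from $\mathcal{O}$ coincides with the quotient-orbifold topology on $\widetilde{V}/(\Delta/K)$, which is precisely what is needed for $\overline{\phi}$ to be a homeomorphism rather than merely a continuous bijection.

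For part (ii), I would first observe that the charts from (i) cover $\mathcal{P}$: given $p\in\mathcal{P}$, Definition \ref{def:suborbi}(\ref{def:suborbi:gen}) supplies a chart, and the Remark following the definition lets me assume $\widetilde{V}$ is connected and closed in $\widetilde{U}$, so part (i) applies. The remaining task is compatibility. Given two such charts with overlapping images and a point $p$ in the overlap, I would produce a common refining chart by the standard argument: pass to a small chart $(W,\widetilde{W}/\Lambda,\psi)$ of $\mathcal{O}$ around $p$ that injects into both ambient charts $(U_i,\widetilde{U}_i/\Gamma_i,\pi_i)$, pull back the submanifolds along the injections $\lambda_i\co\widetilde{W}\to\widetilde{U}_i$, and take the appropriate connected component of $\lambda_i^{-1}(\widetilde{V}_i)$ through the relevant preimage of $p$. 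The injections $\lambda_i$ restrict to embeddings of these submanifold pieces, and since an injection conjugates the group actions via the monomorphism $\overline{\lambda}_i$, the restricted maps descend to injections between the suborbifold charts. Verifying that these restricted maps are genuine chart injections—smooth embeddings commuting with the projections and equivariant with respect to the induced group monomorphisms—is routine but must be done for the $\Delta_i/K_i$ quotients; once it is in place, compatibility holds at $p$, and since $p$ was arbitrary the collection of charts from (i) forms an atlas. Finally, uniqueness of the maximal atlas containing this one (noted in the orbifold preliminaries) makes the resulting structure canonical and guarantees it contains every chart of the form in (i).
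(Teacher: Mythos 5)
Your part (i) follows the paper's own argument: Hausdorffness and second countability pass to the subspace, $\Delta/K$ is finite and acts effectively by construction, injectivity of $\overline{\phi}$ comes from the $\Delta$-submanifold condition, and Lemma \ref{lemma:embedding} (this is where closedness of $\widetilde{V}$ enters) makes $\widetilde{V}/\Delta\to\widetilde{U}/\Gamma$ a topological embedding, so $\overline{\phi}$ is a homeomorphism. Note, however, that Lemma \ref{lemma:metrics} plays no role in part (i); its real purpose is in part (ii), which is a first sign that the main difficulty has been displaced.

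The genuine gap is in part (ii), at exactly the step you call ``routine''. Compatibility at $p$ requires \emph{one} chart around $p$ admitting injections into \emph{both} suborbifold charts. Your construction produces two submanifolds of $\widetilde{W}$, namely $\lambda_1^{-1}(\widetilde{V}_1)$ and $\lambda_2^{-1}(\widetilde{V}_2)$; restricting $\lambda_i$ injects the $i$-th pullback into the $i$-th chart, but to conclude you would need the two pullbacks to coincide, or at least to be related by an element of $\Sigma$, and this is false in general. Concretely: let $\Sigma=\Z_2$ act on $\widetilde{W}=\R^2$ by $(x,y)\mapsto(x,-y)$ with quotient map $\eta$ and nontrivial element $\sigma$, let $h\ge 0$ be smooth and vanish to infinite order precisely on $\{0\}\cup\{1/k;~k\in\mathbb{N}\}$, and let $g=\epsilon h$ with $\epsilon=(-1)^k$ on each interval $(1/(k+1),1/k)$ and $\epsilon=1$ for $x\le 0$; then the graphs $A$ of $h$ and $B$ of $g$ are connected closed $\{e\}$-submanifolds with $\eta(A)=\eta(B)$, yet $B\neq A$ and $B\neq\sigma A$. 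So two local lifts of the same suborbifold need not differ by a group element, and the second injection cannot be obtained by composing $\lambda_2$ with some element of $\Gamma_2$. What actually must be proved is that the homeomorphism $\overline{\phi_2^\prime}^{-1}\circ\overline{\phi_1^\prime}$ (the identity of an open piece of $\mathcal{P}$ read through the two lifts) lifts locally to a smooth, suitably equivariant diffeomorphism between the lifts. That is the crux of the proposition, and it is precisely where the paper deploys the machinery you set aside: it equips $\widetilde{W}$ with a $\Sigma$-invariant Riemannian metric, uses Lemma \ref{lemma:metrics} to show this transition is an isometry between the quotient metric spaces $\widetilde{V}_i^\prime/\Delta_i$, and then invokes the isometry-lifting result from the proof of \cite[Lemma 5.2.2]{claudio} to obtain a local isometry $\mu$ with $\phi_2^\prime\circ\mu=\phi_1^\prime$, which furnishes the common chart and both injections. (In the example above this lift is $(x,h(x))\mapsto(x,g(x))$, which is smooth and isometric since $g^{\prime 2}=h^{\prime 2}$, but is not the restriction of any ambient group element.) Without some substitute for this lifting argument, your compatibility step does not go through.
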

\begin{proof}
  First note that $\mathcal{P}$ is Hausdorff and second countable as a topological subspace of $\mathcal{O}$.
  To see (i) first observe that $\phi\co \widetilde{V}\to V$ is invariant under the smooth effective action of the finite group $\Delta/K$. The induced map $\overline{\phi}\co \widetilde{V}/(\Delta/K)\to V$ is a homeomorphism, since $\widetilde{V}/(\Delta/K)=\widetilde{V}/\Delta$ is a topological subspace of $\widetilde{U}/\Gamma$ (by Lemma \ref{lemma:embedding}) such that $\overline{\pi}(\widetilde{V}/(\Delta/K))=V$. Since $\overline{\pi}$ is a homeomorphism, so is $\overline{\phi}=\overline{\pi}_{|\widetilde{V}/(\Delta/K)}\co \widetilde{V}/(\Delta/K)\to V$.

  We shall now show (ii):
  Let
  $\mathfrak{A}=\{(U_\alpha,\widetilde{U}_\alpha/\Gamma_\alpha,\pi_\alpha)\}_\alpha$
  be an atlas of $\mathcal{O}$ such that for each $\alpha$ there is a subgroup $\Delta_\alpha\subset\Gamma_\alpha$ and a closed
  connected $k$-dimensional $\Delta_\alpha$-submanifold $\widetilde{V}_\alpha\subset\widetilde{U}_\alpha$ such that $\pi_\alpha(\widetilde{V}_\alpha)$ is an open subset of $\mathcal{P}$ and assume that $\mathcal{P}=\bigcup_\alpha \pi_\alpha(\widetilde{V}_\alpha)$.

  Moreover, let
  $\phi_\alpha:={\pi_\alpha}_{|\widetilde{V}_\alpha}\co \widetilde{V}_\alpha\to
  V_\alpha:=\pi_\alpha(\widetilde{V}_\alpha)$ and let $K_\alpha$ denote the
  kernel of the $\Delta_\alpha$-action on
  $\widetilde{V}_\alpha$. From (i) we know that $(V_\alpha,\widetilde{V}_\alpha/(\Delta_\alpha/K_\alpha),\phi_\alpha)$ is a $k$-dimensional orbifold chart on the topological space $\mathcal{P}$.

  We are left to show that
  the collection $\mathfrak{B}=\{(V_\alpha,\widetilde{V}_\alpha/(\Delta_\alpha/K_\alpha),\phi_\alpha)\}_\alpha$
  is a $k$-dimensional orbifold atlas for $\mathcal{P}$. Consider two charts
  $(V_i,\widetilde{V}_i/(\Delta_i/K_i),\phi_i)$,
  $i=1,2$, in $\mathfrak{B}$ and $p\in V_1\cap V_2$ and let
  $(U_i,\widetilde{U}_i/\Gamma_i,\pi_i)$,
  $i=1,2$, be the corresponding charts in
  $\mathfrak{A}$. Since $\mathfrak{A}$ is an atlas of $\mathcal{O}$,
  there is an $n$-dimensional orbifold chart
  $(W,\widetilde{W}/\Sigma,\eta)$ on $\mathcal{O}$ (not necessarily in $\mathfrak{A}$)
  around $p$ and injections $\lambda_i\co
  \widetilde{W}\to\widetilde{U}_i$, $i=1,2$. Let $\widetilde{p}\in\eta^{-1}(p)$. 
  Given an open neighborhood $V^\prime\subset V_1\cap V_2$ of $p$, each $\widetilde{V}^\prime_i:=\phi_i^{-1}(V^\prime)$ is open in $\widetilde{V}_i$ and $\Delta_i$-invariant and hence a $\Delta_i$-submanifold of $\widetilde{U}_i$. Choosing $V^\prime$ sufficiently small and composing each $\lambda_i$ with an appropriate element of $\Gamma_i$, we can guarantee that $\lambda_i(\widetilde{p})\in\widetilde{V}^\prime_i\subset \lambda_i(\widetilde{W})$. 
   Setting $\phi_i^\prime:={\phi_i}_{|\widetilde{V}^\prime}$, we obtain the following commutative diagram (where $\hookrightarrow$ stands for the inclusion).

    \begin{center}
      \makebox[0pt]{
        \xymatrix{
          \widetilde{V}^\prime_1 \ar[d]^{\phi^\prime_1}\ar@{^{(}->}[r]^{\iota_1}&\widetilde{U}_1\ar[d]_{\pi_1}&\ar[l]_{\lambda_1}\widetilde{W}\ar[d]^{\eta}\ar[r]^{\lambda_2}&\widetilde{U}_2\ar[d]^{\pi_2}& \ar@{_{(}->}[l]_{\iota_2}\widetilde{V}^\prime_2\ar[d]^{\phi^\prime_2}\\
          V^\prime\ar@{^{(}->}[r] &U_1&\ar@{_{(}->}[l] W\ar@{^{(}->}[r]&U_2&\ar@{_{(}->}[l] V^\prime
        }
      }
    \end{center}
Now equip $\widetilde{W}$ with a $\Sigma$-invariant Riemannian metric $g$, $W\approx\widetilde{W}/\Sigma$ with the quotient metric $\overline{d}=\overline{d_g}$ induced by $g$ and $V^\prime\subset W$ with the intrinsic metric $\widehat{d}$ induced by $\overline{d}$. Identifying each $\widetilde{V}_i^\prime/\Delta_i=\widetilde{V}_i^\prime/(\Delta_i/K_i)$ with $V^\prime$ (via ${\overline{\phi}_i^\prime}$), Lemma \ref{lemma:metrics} implies that the metric induced on $\widetilde{V}_i^\prime/\Delta_i$ by $\widehat{d}$ coincides with the quotient metric induced by the Riemannian metric $h_i:=({\lambda_i}^{-1}\circ\iota_i)^\ast g$ on $\widetilde{V}_i^\prime$. In particular, ${\overline{\phi}_2^\prime}^{-1}\circ\overline{\phi}_1^\prime\co \widetilde{V}_1^\prime/\Delta_1\to\widetilde{V}_2^\prime/\Delta_2$ is an isometry with respect to the quotient metrics. By the proof of \cite[Lemma 5.2.2]{claudio}, there is $\varepsilon>0$ such that each open ball $B_\varepsilon(\lambda_i(\widetilde{p}))\subset (\widetilde{V}_i^\prime,h_i)$ is a full ${\Delta_i}_{\widetilde{p}}$-submanifold and there is an isometry $\mu\co B_\varepsilon(\lambda_1(\widetilde{p}))\to B_\varepsilon(\lambda_2(\widetilde{p}))$ satisfying $\phi_2^\prime\circ\mu=\phi_1^\prime$ on $B_\varepsilon(\lambda_1(\widetilde{p}))$. Since the restriction of $\phi_1^\prime$ to $B_\varepsilon(\lambda_1(\widetilde{p}))$ is an orbifold chart around $p$, $\phi_1$ and $\phi_2$ are compatible orbifold charts in $p$.

Since $\phi_1$ and $\phi_2$ were arbitrary charts from $\mathfrak{B}$, we conclude that $\mathfrak{B}$ is an orbifold atlas on the set $\mathcal{P}$.
  \end{proof}

  
  \begin{example}
    \label{example:suborbis}
  \begin{enumiii}
  \item Let $D=\{x\in\R^2;~\|x\|<1\} \subset\R^2$ be the open disk and let $R(\theta)$ denote the (positive) rotation around the origin by the angle $\theta$. Consider $\widetilde{U}=D$, $\Gamma=\langle R(\pi/2)\rangle$ and $\pi\co \widetilde{U}\to U:=D/\Gamma$ the canonical projection. Then $\mathcal{O}=(U,\mathfrak{A})$ with atlas given by $\mathfrak{A}=\{(U,\widetilde{U}/\Gamma,\pi)\}$ is an orbifold and the set $\mathcal{P}=((-1,1)\times\{0\})/\langle R(\pi)\rangle$ is a suborbifold of $\mathcal{O}$: we can just use $\{(U,\widetilde{U}/\Gamma,\pi)\}$ as above, $\widetilde{V}=(-1,1)\times\{0\}$ and $\Delta=\langle R(\pi)\rangle$ in Definition \ref{def:suborbi} (\ref{def:suborbi:gen}). Since $\Delta$ acts effectively on $\widetilde{V}$, the subset $\mathcal{P}\subset\mathcal{O}$ is an embedded suborbifold. Note, however, that $\widetilde{V}$ is not a full $\Delta$-submanifold, since $R(\pi/2)$ fixes $(0,0)\in\widetilde{V}$ but is not an element of $\Delta$. 
  \item \label{example:suborbis:pt}Given an orbifold $\mathcal{O}$, every subset containing just a point $p\in\mathcal{O}$ is a zero-dimensional full embedded suborbifold: If $(U,\widetilde{U}/\Gamma,\pi)$ is a chart on $\mathcal{O}$ around $p$ and $\widetilde{p}\in\pi^{-1}(p)$, then $\widetilde{V}:=\{\widetilde{p}\}$ is a full $G_{\widetilde{p}}$-submanifold and a $\{e\}$-submanifold of $\widetilde{U}$ and $\pi(\widetilde{V})=\{p\}$. Generalizing this example, it is straightforward to verify that the $0$-dimensional suborbifolds of a fixed orbifold are precisely the discrete subsets and that each such discrete suborbifold is full and embedded.
  \item A subset $\mathcal{P}$ of an $n$-dimensional orbifold $\mathcal{O}$ is an $n$-dimensional suborbifold if and only if $\mathcal{P}$ is open. Each such open suborbifold is full and embedded. To verify these claims, first let $\mathcal{P}\subset\mathcal{O}$ be an $n$-dimensional suborbifold, $p\in\mathcal{P}$ and $(U,\widetilde{U}/\Gamma,\pi)$, $\Delta\subset\Gamma$ and $\widetilde{V}\subset\widetilde{U}$ be as in Definition \ref{def:suborbi} (\ref{def:suborbi:gen}). Since $\pi(\widetilde{V})$ is open in $U$ and contains $p$, the set $\mathcal{P}$ is open in $\mathcal{O}$. Conversely, if $\mathcal{P}$ is an open subset of $\mathcal{O}$ and $p\in\mathcal{P}$, let $(U,\widetilde{U}/\Gamma,\pi)$ be a chart of $\mathcal{O}$ around $p$.  Diminishing $U$ if necessary, we can assume that $U\subset \mathcal{P}$ and conclude that $\mathcal{P}$ is an $n$-dimensional full embedded suborbifold of $\mathcal{O}$.
    \item If $\mathcal{O}_1$, $\mathcal{O}_2$ are two orbifolds and $\mathcal{P}_1\subset\mathcal{O}_1$, $\mathcal{P}_2\subset\mathcal{O}_2$ are suborbifolds, then $\mathcal{P}_1\times\mathcal{P}_2\subset\mathcal{O}_1\times\mathcal{O}_2$ is a suborbifold: If $(p_1,p_2)\in\mathcal{P}_1\times\mathcal{P}_2$ and, for each $i=1,2$, $(U_i,\widetilde{U}_i/\Gamma_i,\pi_i)$ is a chart on $\mathcal{O}_i$ around $p_i$ with associated subgroup $\Delta_i\subset\Gamma_i$ and $\Delta_i$-submanifold $\widetilde{V}_i$ of $\widetilde{U}_i$ as in Definition \ref{def:suborbi} (\ref{def:suborbi:gen}), then, setting $\Delta=\Delta_1\times\Delta_2$, we observe that $\widetilde{V}_1\times\widetilde{V}_2$ is a $\Delta$-submanifold of the $\Gamma_1\times\Gamma_2$-manifold $\widetilde{U}_1\times\widetilde{U}_2$ and $(\pi_1\times\pi_2)(\widetilde{V}_1\times\widetilde{V}_2)=\pi_1(\widetilde{V}_1)\times\pi_2(\widetilde{V}_2)$ is an open neighborhood of $(p_1,p_2)$ in $\mathcal{P}_1\times\mathcal{P}_2$.
    \item \label{example:suborbis:diag} Given an $n$-dimensional orbifold $\mathcal{O}$, the diagonal $\mathcal{D}:=\{(p,p);~p\in\mathcal{O}\}$ is an embedded $n$-dimensional suborbifold of $\mathcal{O}\times\mathcal{O}$ (compare \cite[Example 2.15]{zbMATH05546218}). This follows either by applying Proposition \ref{proposition:graph} below to the identity $\mathcal{O}\to\mathcal{O}$ or by the following direct argument: Given $(p,p)\in\mathcal{D}$, let $(U,\widetilde{U}/\Gamma,\pi)$ be a chart of $\mathcal{O}$ around $p$. Then $(U\times U,(\widetilde{U}\times\widetilde{U})/(\Gamma\times\Gamma),\pi\times\pi)$ is a chart on $\mathcal{O}\times\mathcal{O}$. It is straightforward to verify that the diagonal $\widetilde{V}:=\{(x,x);~x\in\widetilde{U}\}$ is a connected $n$-dimensional $\Delta$-submanifold of $\widetilde{U}\times\widetilde{U}$ with respect to $\Delta:=\{(\gamma,\gamma);~\gamma\in\Gamma\}$ (which acts effectively on $\widetilde{V}$) and that $(\pi\times\pi)(\widetilde{V})=\{(q,q);~q\in U\}=(U\times U)\cap\mathcal{D}$.
  \end{enumiii}
\end{example}




\begin{definition}
  \label{def:smooth}
  \begin{enumiii}
  \item \label{def:smooth:i} A continuous map $f\co\mathcal{O}_1\to\mathcal{O}_2$ between (the underlying spaces of) orbifolds is called \emph{smooth} if for every $p\in\O_1$ there are charts $(U_i,\widetilde{U}_i/\Gamma_i,\pi_i)$, $i=1,2$, around $p$ and $f(p)$, respectively, a smooth map $\widetilde{f}\co \widetilde{U}_1\to\widetilde{U}_2$ and a homomorphism $\Theta\co\Gamma_1\to\Gamma_2$ such that $f\circ\pi_1=\pi_2\circ\widetilde{f}$ and $\widetilde{f}(\gamma x)=\Theta(\gamma)\widetilde{f}(x)$ for every $\gamma\in\Gamma_1$ and $x\in\widetilde{U}_1$.
  \item \label{def:smooth:ii} Let $f\co\mathcal{O}_1\to\mathcal{O}_2$ be smooth and $p\in\mathcal{O}_1$. The \emph{rank} of $f$ in $p$, denoted by $\rk f(p)$, is the rank of any lift $\widetilde{f}$ as in (\ref{def:smooth:i}) in some $\widetilde{p}\in\pi_1^{-1}(p)$.
  \end{enumiii}
\end{definition}

\begin{remark}
  Note that the compatibility conditions on $\mathcal{O}_1$ imply that the rank in (\ref{def:smooth:i}) is well-defined.
\end{remark}

  
We now verify that the graph of a smooth map is a suborbifold. Note that this statement and its proof below are similar to \cite[Proposition 3.8]{zbMATH05546218}. We include the result as a reference, since in the context of \cite{zbMATH05546218}, which does not impose any compatibility conditions on suborbifold charts, it is not clear if the graph is in fact an orbifold. Now using our more rigid condition on suborbifold charts (referred to as ``saturated'' in \cite{zbMATH06513434}) and our Proposition \ref{proposition:suborbis}, this is a direct consequence of the following proposition.


\begin{proposition}
  \label{proposition:graph}
  Let $f\co\mathcal{O}_1\to\mathcal{O}_2$ be a smooth map between orbifolds and $n=\dim\mathcal{O}_1$. Then the graph $\gr f$ of $f$ is an embedded $n$-dimensional suborbifold of $\mathcal{O}_1\times\mathcal{O}_2$. If the image of $f$ is contained in the regular part of $\mathcal{O}_2$, then $\gr f$ is a full suborbifold.
\end{proposition}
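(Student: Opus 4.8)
The plan is to produce, around an arbitrary point $(p,f(p))\in\gr f$, a single suborbifold chart built from a local lift of $f$. First I would invoke Definition \ref{def:smooth} (\ref{def:smooth:i}) to obtain charts $(U_i,\widetilde{U}_i/\Gamma_i,\pi_i)$, $i=1,2$, around $p$ and $f(p)$, a smooth lift $\widetilde{f}\co\widetilde{U}_1\to\widetilde{U}_2$ and a homomorphism $\Theta\co\Gamma_1\to\Gamma_2$ satisfying $f\circ\pi_1=\pi_2\circ\widetilde{f}$ and $\widetilde{f}(\gamma x)=\Theta(\gamma)\widetilde{f}(x)$. On $\mathcal{O}_1\times\mathcal{O}_2$ I would use the product chart $(U_1\times U_2,(\widetilde{U}_1\times\widetilde{U}_2)/(\Gamma_1\times\Gamma_2),\pi_1\times\pi_2)$ and propose as suborbifold data the graph $\widetilde{V}:=\{(x,\widetilde{f}(x));~x\in\widetilde{U}_1\}$ of $\widetilde{f}$ together with the graph $\Delta:=\{(\gamma,\Theta(\gamma));~\gamma\in\Gamma_1\}$ of $\Theta$. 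Since $\Theta$ is a homomorphism, $\Delta$ is a subgroup of the finite group $\Gamma_1\times\Gamma_2$ (hence automatically closed), while $\widetilde{V}$, being the image of the smooth embedding $x\mapsto(x,\widetilde{f}(x))$ of the connected manifold $\widetilde{U}_1$, is a connected $n$-dimensional submanifold of $\widetilde{U}_1\times\widetilde{U}_2$.

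Next I would verify that $\widetilde{V}$ is a $\Delta$-submanifold in the sense of Definition \ref{def:H-sub}. Equivariance of $\widetilde{f}$ gives $(\gamma,\Theta(\gamma))\cdot(x,\widetilde{f}(x))=(\gamma x,\widetilde{f}(\gamma x))\in\widetilde{V}$, so $\widetilde{V}$ is $\Delta$-invariant. For the orbit condition I would fix $z=(x,\widetilde{f}(x))$ and note that a point $(\gamma_1 x,\gamma_2\widetilde{f}(x))$ of the $(\Gamma_1\times\Gamma_2)$-orbit of $z$ lies in $\widetilde{V}$ precisely when $\gamma_2\widetilde{f}(x)=\widetilde{f}(\gamma_1 x)=\Theta(\gamma_1)\widetilde{f}(x)$, that is, when $\Theta(\gamma_1)^{-1}\gamma_2$ fixes $\widetilde{f}(x)$; in that case the point equals $(\gamma_1,\Theta(\gamma_1))\cdot z\in\Delta z$. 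This yields $\widetilde{V}\cap(\Gamma_1\times\Gamma_2)z=\Delta z$. Using $f\circ\pi_1=\pi_2\circ\widetilde{f}$, the image $(\pi_1\times\pi_2)(\widetilde{V})$ equals $\{(q,f(q));~q\in U_1\}=(U_1\times U_2)\cap\gr f$, an open neighborhood of $(p,f(p))$ in $\gr f$. By Definition \ref{def:suborbi} (\ref{def:suborbi:gen}) this exhibits $\gr f$ as an $n$-dimensional suborbifold, and Proposition \ref{proposition:suborbis} then supplies its canonical orbifold structure.

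For the embedded claim I would observe that $\widetilde{V}$ is already connected and that $\Delta$ acts effectively on it: if $(\gamma,\Theta(\gamma))$ fixes every $(x,\widetilde{f}(x))$, then $\gamma x=x$ for all $x\in\widetilde{U}_1$, so $\gamma=e$ by effectiveness of the $\Gamma_1$-action, and thus $(\gamma,\Theta(\gamma))$ is trivial. For the full claim, under the hypothesis $\img f\subset\mathcal{O}_2^{\reg}$ the stabilizer $(\Gamma_2)_{\widetilde{f}(x)}$ is trivial for every $x$, since $\widetilde{f}(x)$ lies over the regular point $f(\pi_1(x))$. The membership criterion above then forces $\Theta(\gamma_1)^{-1}\gamma_2=e$, i.e.\ $(\gamma_1,\gamma_2)=(\gamma_1,\Theta(\gamma_1))\in\Delta$, so $\widetilde{V}$ is a full $\Delta$-submanifold and $\gr f$ is a full suborbifold.

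The only genuinely delicate point is the orbit identity $\widetilde{V}\cap(\Gamma_1\times\Gamma_2)z=\Delta z$; everything else is essentially bookkeeping with the product chart. Its proof hinges on the equivariance relation $\widetilde{f}(\gamma x)=\Theta(\gamma)\widetilde{f}(x)$, which converts the graph-membership condition $\gamma_2\widetilde{f}(x)=\widetilde{f}(\gamma_1 x)$ into the statement that $\Theta(\gamma_1)^{-1}\gamma_2$ stabilizes $\widetilde{f}(x)$. I would keep careful track of this stabilizer, since it is exactly what separates the two conclusions: in general it may be nontrivial, so only the $H$-submanifold (hence suborbifold) property holds, whereas regularity of $f(\pi_1(x))$ collapses it and upgrades $\widetilde{V}$ to a full $\Delta$-submanifold.
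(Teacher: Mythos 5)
Your proposal is correct and matches the paper's own proof essentially step for step: the same product chart, the same choice of $\widetilde{V}$ as the graph of the lift $\widetilde{f}$ and $\Delta$ as the graph of $\Theta$, the same use of equivariance to reduce graph-membership to the stabilizer of $\widetilde{f}(x)$, and the same arguments for effectiveness (embedded case) and trivial stabilizers over regular points (full case). The only cosmetic difference is that you verify the orbit identity $\widetilde{V}\cap(\Gamma_1\times\Gamma_2)z=\Delta z$ of Definition \ref{def:H-sub} directly, while the paper checks the equivalent formulation of Remark \ref{remark:sat}.
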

\begin{proof}
  Let $(p,f(p))\in\gr f$. Then there are charts $(U_i,\widetilde{U}_i/\Gamma_i,\pi_i)$, $i=1,2$, around $p$, $f(p)$, respectively, a smooth map $\widetilde{f}\co\widetilde{U}_1\to\widetilde{U}_2$ and a homomorphism $\Theta\co\Gamma_1\to\Gamma_2$ such that $\pi_2\circ\widetilde{f}=f\circ\pi_1$ and $\widetilde{f}(\gamma x)=\Theta(\gamma)\widetilde{f}(x)$. Now consider the chart $(U_1\times U_2,(\widetilde{U}_1\times\widetilde{U}_2)/(\Gamma_1\times\Gamma_2),\pi_1\times\pi_2)$ on $\mathcal{O}_1\times\mathcal{O}_2$ and note that the graph $\widetilde{V}$ of $\widetilde{f}$ is a connected $n$-dimensional submanifold of $\widetilde{U}_1\times\widetilde{U}_2$ and the graph $\Delta$ of $\Theta$ is a subgroup of $\Gamma_1\times\Gamma_2$. To see that $\widetilde{V}$ is a $\Delta$-submanifold of the $\Gamma_1\times\Gamma_2$-manifold $\widetilde{U}_1\times\widetilde{U}_2$, first note that it is easily seen to be $\Delta$-invariant (since $\widetilde{f}$ is $\Theta$-equivariant). Now let $(\gamma_1,\gamma_2)\in\Gamma_1\times\Gamma_2$ and $(x,\widetilde{f}(x))\in\widetilde{V}$ such that $(\gamma_1,\gamma_2)(x,\widetilde{f}(x))\in\widetilde{V}$. Then $\gamma_2\widetilde{f}(x)=\widetilde{f}(\gamma_1x)=\Theta(\gamma_1)\widetilde{f}(x)$ and hence $(\gamma_1,\gamma_2)(x,\widetilde{f}(x))=(\gamma_1,\Theta(\gamma_1))(x,\widetilde{f}(x)))$ with $(\gamma_1,\Theta(\gamma_1))\in\Delta$ and we obtain that $\widetilde{V}$ is a $\Delta$-submanifold of $\widetilde{U}_1\times\widetilde{U}_2$. Since the $\Gamma_1$-action on $\widetilde{U}_1$ is effective, so is the action of $\Delta$ on $\widetilde{V}$. Since $(\pi_1\times\pi_2)(\widetilde{V})$ is just the graph of $f_{|U_1}$ and hence equal to $(U_1\times U_2)\cap\gr f$, we conclude that $\gr f$ is an embedded suborbifold of $\mathcal{O}_1\times\mathcal{O}_2$

  If the image of $f$ is contained in $\mathcal{O}_2^\reg$, then with $(\gamma_1,\gamma_2)\in\Gamma_1\times\Gamma_2$ and $x\in\widetilde{U}_1$ as above, the relation $\gamma_2\widetilde{f}(x)=\Theta(\gamma_1)\widetilde{f}(x)$ implies $(\gamma_1,\gamma_2)=(\gamma_1,\Theta(\gamma_1))\in\Delta$. Hence $\gr f$ becomes a full suborbifold.
\end{proof}

\begin{definition}
  \label{def:immersion}
  \begin{enumiii}
  \item Let $f\co\mathcal{O}_1\to\mathcal{O}_2$ be a smooth map between orbifolds. $f$ is an \emph{immersion} if it has constant rank $\dim\mathcal{O}_1$. $f$ is a \emph{submersion} if it has constant rank $\dim\mathcal{O}_2$.
  \item An \emph{(orbifold) embedding} is an immersion $f\co\mathcal{O}_1\to\mathcal{O}_2$ which is a topological embedding (between the underlying spaces).
  \end{enumiii}
\end{definition}

\begin{example}
  \label{example:smooth}
  \begin{enumiii}
  \item \label{example:smooth:emb} Let $\mathcal{P}\subset\mathcal{O}$ be an embedded suborbifold in the sense of Definition \ref{def:suborbi} (\ref{def:suborbi:embedded}). Then the inclusion $\iota\co\mathcal{P}\hookrightarrow\mathcal{O}$ is an orbifold embedding, since it is a topological embedding and locally (using the notation from Proposition \ref{proposition:suborbis} (\ref{proposition:suborbis2}) and assuming $\Delta$ to act effectively on $\widetilde{V}$) we can use the inclusions $\widetilde{\iota}\co\widetilde{V}\hookrightarrow\widetilde{U}$ and $\Theta\co\Delta\hookrightarrow \Gamma$.
  \item \label{example:smooth:graph} In the setting of Proposition \ref{proposition:graph} the map $\mathcal{O}_1\ni p\mapsto (p,f(p))\in \mathcal{O}_1\times\mathcal{O}_2$ is an orbifold embedding as is easily revealed by a closer look at the proof of that proposition.
  \end{enumiii}
\end{example}

\begin{proposition}
  \label{proposition:embed-suborbi}
  If $f\co\mathcal{O}_1\to\mathcal{O}_2$ is an embedding and $\mathcal{P}$ is a suborbifold of $\mathcal{O}_1$, then $f(\mathcal{P})$ is a suborbifold of $\mathcal{O}_2$ of dimension $\dim\mathcal{P}$. If $\mathcal{P}$ is embedded, then so is $f(\mathcal{P})$.
\end{proposition}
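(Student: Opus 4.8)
The plan is to produce, around each point of $f(\mathcal{P})$, a suborbifold chart in the sense of Definition \ref{def:suborbi} by pushing a suborbifold chart of $\mathcal{P}$ forward through a local lift of $f$. Fix $p\in\mathcal{P}$, set $q=f(p)$ and write $k=\dim\mathcal{P}$. The first task is to arrange a single chart on $\mathcal{O}_1$ that simultaneously carries the suborbifold data of $\mathcal{P}$ and a lift of $f$. Using the remark following Definition \ref{def:suborbi} together with Lemma \ref{lemma:closed}, I would replace the suborbifold chart around $p$ by a minimal one $(U_1,\widetilde{U}_1/\Gamma_1,\pi_1)$ --- i.e.\ one in which $\Gamma_1$ is the isotropy group fixing a chosen $\widetilde{p}\in\pi_1^{-1}(p)\cap\widetilde{V}$ --- where $\widetilde{V}$ is a connected, closed $k$-dimensional $\Delta$-submanifold with $\Delta\subset\Gamma_1$. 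Since a minimal chart injects, after shrinking, into any chart around the same point, I would use the compatibility of the two charts around $p$ to obtain an injection from $\widetilde{U}_1$ into the chart furnished by Definition \ref{def:smooth} for the smoothness of $f$ at $p$; composing the given lift with this injection yields a lift $\widetilde{f}\co\widetilde{U}_1\to\widetilde{U}_2$ of $f$ over $(U_1,\widetilde{U}_1/\Gamma_1,\pi_1)$ and some chart $(U_2,\widetilde{U}_2/\Gamma_2,\pi_2)$ around $q$, together with a homomorphism $\Theta\co\Gamma_1\to\Gamma_2$ satisfying $\pi_2\circ\widetilde{f}=f\circ\pi_1$ and $\widetilde{f}(\gamma x)=\Theta(\gamma)\widetilde{f}(x)$.

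Because $f$ is an immersion, $\widetilde{f}$ has full rank $\dim\mathcal{O}_1$ everywhere and hence is a local embedding at $\widetilde{p}$; shrinking $\widetilde{U}_1$ to a $\Gamma_1$-invariant ball about $\widetilde{p}$ (possible as $\Gamma_1$ fixes $\widetilde{p}$), I may assume $\widetilde{f}$ is an embedding of manifolds. I would then set $\widetilde{V}':=\widetilde{f}(\widetilde{V})\subset\widetilde{U}_2$ and $\Delta':=\Theta(\Delta)\subset\Gamma_2$. As $\widetilde{f}$ is an embedding, $\widetilde{V}'$ is a $k$-dimensional submanifold of $\widetilde{U}_2$, and $\Theta$-equivariance makes it $\Delta'$-invariant. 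The core verification is the orbit condition of Definition \ref{def:H-sub}: given $y=\widetilde{f}(x)$ with $x\in\widetilde{V}$ and $\gamma_2\in\Gamma_2$ with $\gamma_2 y\in\widetilde{V}'$, write $\gamma_2 y=\widetilde{f}(x')$ with $x'\in\widetilde{V}$; applying $\pi_2$ and using that $f$ is injective gives $\pi_1(x)=\pi_1(x')$, so $x'=\gamma_1 x$ for some $\gamma_1\in\Gamma_1$, and since $\widetilde{V}$ is a $\Delta$-submanifold, $\gamma_1 x=\delta x$ for some $\delta\in\Delta$. Then $\gamma_2 y=\widetilde{f}(\delta x)=\Theta(\delta)y\in\Delta' y$, which with the obvious reverse inclusion gives $\widetilde{V}'\cap\Gamma_2 y=\Delta' y$. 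Hence $\widetilde{V}'$ is a $\Delta'$-submanifold. Finally $\pi_2(\widetilde{V}')=f(\pi_1(\widetilde{V}))$, and since $f$ restricts to a homeomorphism $\mathcal{P}\to f(\mathcal{P})$ and $\pi_1(\widetilde{V})$ is open in $\mathcal{P}$, this set is an open neighborhood of $q$ in $f(\mathcal{P})$. Thus $(U_2,\widetilde{U}_2/\Gamma_2,\pi_2)$, $\Delta'$ and $\widetilde{V}'$ witness that $f(\mathcal{P})$ is a $k$-dimensional suborbifold.

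For the embedded case, when $\widetilde{V}$ is connected with $\Delta$ acting effectively, $\widetilde{V}'=\widetilde{f}(\widetilde{V})$ is connected, and if $\Theta(\delta)$ acts trivially on $\widetilde{V}'$ then $\widetilde{f}(\delta x)=\widetilde{f}(x)$ for all $x\in\widetilde{V}$, whence $\delta x=x$ for all such $x$ by injectivity of $\widetilde{f}$ and then $\delta=e$ by effectiveness of $\Delta$ on $\widetilde{V}$. Hence $\Delta'$ acts effectively on $\widetilde{V}'$ and $f(\mathcal{P})$ is embedded by Definition \ref{def:suborbi} (\ref{def:suborbi:embedded}).

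I expect the main obstacle to be the first paragraph: reconciling the (a priori unrelated) chart from the suborbifold hypothesis with the chart from the definition of smoothness, and upgrading the lift to a genuine embedding while keeping everything equivariant. Once a single chart supports both pieces of data and $\widetilde{f}$ is an embedding, the pushforward computation and the effectiveness argument are routine equivariant bookkeeping. The delicate point in the alignment is that transporting suborbifold data along a chart injection behaves well only when the injection induces an isomorphism on groups, which is precisely why reducing to a minimal chart --- so that injections out of it are group isomorphisms --- is the natural device to avoid having to pull the $\Delta$-submanifold $\widetilde{V}$ back through a non-surjective $\overline{\lambda}$.
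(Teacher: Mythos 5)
Your proposal is correct and takes essentially the same route as the paper's proof: push $\widetilde{V}$ forward through the (locally embedded, equivariant) lift $\widetilde{f}$, verify the $\Delta'$-submanifold condition via injectivity of $f$ exactly as the paper does, and obtain effectiveness in the embedded case from injectivity of $\widetilde{f}$ together with effectiveness of $\Delta$ on $\widetilde{V}$. The only divergence is the chart-alignment step, where the paper simply diminishes the chart carrying the lift and asserts that the suborbifold data can be assumed to live on it, whereas you transport the lift into a minimal suborbifold chart by composing with an injection --- a difference in bookkeeping (and arguably the more carefully justified direction, since pushing a lift through an injection is immediate while pulling $\widetilde{V}$ back through a non-surjective $\overline{\lambda}$ is delicate), not a different argument.
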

\begin{proof}
  Given $p\in\mathcal{P}$, let $(U_i,\widetilde{U}_i/\Gamma_i,\pi_i)$, $i=1,2$, be charts around $p$ and $f(p)$, respectively, $\widetilde{f}\co\widetilde{U}_1\to\widetilde{U}_2$ an immersion and $\Theta\co\Gamma_1\to\Gamma_2$ a homomorphism such that $\pi_2\circ\widetilde{f}=f\circ\pi_1$ and $\widetilde{f}(\gamma x)=\Theta(\gamma)\widetilde{f}(x)$ for every $\gamma\in\Gamma_1$, $x\in\widetilde{U}_1$. Since every $\widetilde{p}\in\pi_1^{-1}(p)$ has an open neighborhood on which $\widetilde{f}$ is a smooth embedding, we can, after diminishing $U_1$ if necessary, assume that $\widetilde{f}$ is a smooth embedding. 

Again diminishing $U_1$ if necessary, we can assume that there is a subgroup $\Delta_1$ of $\Gamma_1$ and a $\Delta_1$-submanifold $\widetilde{V}_1\subset\widetilde{U}_1$ such that $\pi_1(\widetilde{V}_1)$ is an open neighborhood of $p$ in $\mathcal{P}$.

Then $\Delta_2:=\Theta(\Delta_1)$ is a subgroup of $\Gamma_2$ and $\widetilde{V}_2:=\widetilde{f}(\widetilde{V}_1)$ is a $\Delta_2$-invariant submanifold of $\widetilde{U}_2$ of dimension $\dim\mathcal{P}$. To see that $\widetilde{V}_2$ is a $\Delta_2$-submanifold, let $\gamma\in\Gamma_2$ and $y\in\widetilde{V}_2$ such that $\gamma y\in\widetilde{V}_2$. Let $x_1,x_2\in\widetilde{V}_1$ such that $y=\widetilde{f}(x_1)$ and $\gamma y=\widetilde{f}(x_2)$. Then 
\[f(\pi_1(x_2))=\pi_2(\widetilde{f}(x_2))=\pi_2(\gamma\widetilde{f}(x_1))=\pi_2(\widetilde{f}(x_1))=f(\pi_1(x_1)).\]
Since $f$ is injective, $\pi_1(x_2)=\pi_1(x_1)$ and hence there is $\gamma^\prime\in\Gamma_1$ such that $x_2=\gamma^\prime x_1$. Since $\widetilde{V}_1$ is a $\Delta_1$-submanifold, there is $\delta\in\Delta_1$ such that $\delta x_1=\gamma^\prime x_1=x_2$. Thus
\[\Theta(\delta)y=\Theta(\delta)\widetilde{f}(x_1)=\widetilde{f}(\delta x_1)=\widetilde{f}(x_2)=\gamma y\]
and we obtain that $\widetilde{V}_2$ is a $\Delta_2$-submanifold of $\widetilde{U}_2$. Since $f$ is a topological embedding, $\pi_2(\widetilde{V}_2)=f(\pi_1(\widetilde{V}_1))$ is open in $f(\mathcal{P})$. Since $p\in \mathcal{P}$ was arbitrary, we conclude that $f(\mathcal{P})$ is suborbifold of $\mathcal{O}_2$ of dimension $\dim\mathcal{P}$.

Now let $\mathcal{P}$ be an embedded suborbifold of $\mathcal{O}_1$. Then, in the local situation above, we can assume that $\Delta_1$ acts effectively on the connected manifold $\widetilde{V}_1$. Let $\delta\in\Delta_1$ such that $\Theta(\delta)y=y$ for every $y\in\widetilde{V}_2$. Then $\widetilde{f}(\delta x)=\widetilde{f}(x)$ for every $x\in\widetilde{V}_1$. Since $\widetilde{f}$ is injective and $\Delta_1$ acts effectively on $\widetilde{V}_1$, we obtain $\delta=e$ and hence $\Theta(\delta)=e$. We conclude that $\Delta_2=\Theta(\Delta_1)$ acts effectively on $\widetilde{V}_2=\widetilde{f}(\widetilde{V}_1)$. Since this argument holds around every $p\in\mathcal{P}$, the suborbifold $f(\mathcal{P})$ is embedded in $\mathcal{O}_2$.
\end{proof}

Joining Example \ref{example:smooth} (\ref{example:smooth:emb}) and Proposition \ref{proposition:embed-suborbi} (applied to the embedded suborbifold $\mathcal{P}=\mathcal{O}_1$), we obtain:

\begin{theorem}
  \label{theorem:embed-orbi}
  If $\mathcal{O}$ is an orbifold, a subset $\mathcal{P}\subset\mathcal{O}$ is an embedded suborbifold (in the sense of Definition \ref{def:suborbi} (\ref{def:suborbi:embedded})) if and only if there is an orbifold $\mathcal{O}^\prime$ and an orbifold embedding $f\co\mathcal{O}^\prime\to\mathcal{O}$ such that $\mathcal{P}=f(\mathcal{O}^\prime)$.
\end{theorem}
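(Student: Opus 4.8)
The plan is to treat the equivalence as two separate implications, each of which collapses to a result already established, so that no new geometric work is required.

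For the direction ($\Rightarrow$), I would start from an embedded suborbifold $\mathcal{P}\subset\mathcal{O}$ and produce the required abstract orbifold together with an embedding realizing it. Proposition \ref{proposition:suborbis} (\ref{proposition:suborbis2}) equips $\mathcal{P}$ with a canonical orbifold structure, so I simply take $\mathcal{O}^\prime=\mathcal{P}$ with that structure and let $f\co\mathcal{P}\hookrightarrow\mathcal{O}$ be the inclusion. Example \ref{example:smooth} (\ref{example:smooth:emb}) states precisely that this inclusion is an orbifold embedding, and tautologically $f(\mathcal{O}^\prime)=\mathcal{P}$. This settles one direction.

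For the direction ($\Leftarrow$), I would begin with an abstract orbifold $\mathcal{O}^\prime$ and an orbifold embedding $f\co\mathcal{O}^\prime\to\mathcal{O}$ with $\mathcal{P}=f(\mathcal{O}^\prime)$, and invoke Proposition \ref{proposition:embed-suborbi}. The crucial preliminary observation is that $\mathcal{O}^\prime$ is itself an embedded suborbifold of $\mathcal{O}^\prime$: it is open in itself, and by Example \ref{example:suborbis} every open subset of an orbifold is a full embedded suborbifold. Applying Proposition \ref{proposition:embed-suborbi} to the embedding $f$ and to the embedded suborbifold $\mathcal{O}^\prime\subset\mathcal{O}^\prime$ then immediately gives that $f(\mathcal{O}^\prime)=\mathcal{P}$ is an embedded suborbifold of $\mathcal{O}$.

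I do not expect any genuine obstacle here, since the heavy lifting has already been carried out, namely the compatibility argument turning a suborbifold into an orbifold in Proposition \ref{proposition:suborbis} and the pushforward of suborbifolds under an embedding in Proposition \ref{proposition:embed-suborbi}. The single point that warrants care is the verification that the whole source orbifold $\mathcal{O}^\prime$ legitimately counts as an embedded suborbifold of itself, so that Proposition \ref{proposition:embed-suborbi} is applicable with the source orbifold in the role of $\mathcal{P}$; this is exactly the content of the open-suborbifold example. The dimension bookkeeping is then automatic from the cited statements, with $\dim\mathcal{P}=k$ in the first direction and $\dim\mathcal{P}=\dim\mathcal{O}^\prime$ in the second.
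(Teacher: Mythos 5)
Your proposal is correct and follows essentially the same route as the paper, which derives the theorem by ``joining Example \ref{example:smooth} (\ref{example:smooth:emb}) and Proposition \ref{proposition:embed-suborbi} (applied to the embedded suborbifold $\mathcal{P}=\mathcal{O}_1$)''. Your explicit justification that the source orbifold counts as an embedded suborbifold of itself (via the open-suborbifold example) is exactly the implicit step the paper relies on.
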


\begin{remark}
  Note that \cite{zbMATH06513434} also addresses the question of the relation between embeddings and suborbifolds, using definitions a bit different from ours. In light of Proposition \ref{proposition:split}, the theorem above resembles \cite[Theorem 1 (2)]{zbMATH06513434}. 
\end{remark}



\section{Quotients}
\label{sec:quotients}

We would now like to construct suborbifolds using global quotients. Of course, if $H$ is a subgroup of a finite group $G$ and $N$ is a closed $H$-submanifold of a $G$-manifold $M$, then $N/H$ is a suborbifold of $M/G$. To generalize this result to possibly infinite groups, we need the following classical definitions.

\begin{definition}
  Let $G$ be a Lie group acting on a manifold $M$. The action is called \emph{almost free} if every stabilizer $G_x$, $x\in M$, is finite.
\end{definition}

\begin{definition}
  Let $G$ be a Lie group acting smoothly and almost freely on a manifold $M$. A \emph{slice} at $x\in M$ is a full $G_x$-submanifold $S$ of $M$ such that $x\in S$ and $T_yM=T_yS\oplus T_y(Gy)$ for every $y\in S$.
\end{definition}

Now fix a Lie group $G$ acting smoothly, almost freely and properly on a manifold $M$. The slice theorem \cite[Theorem 3.35]{2009arXiv0901.2374A} (based on \cite{zbMATH03167836}) says that at every $x\in M$ there is a slice $S$ (and the proof in \cite{2009arXiv0901.2374A} makes it clear that $S$ can be chosen arbitrarily small). If $p\co M\to M/G$ denotes the quotient map, $K$ is the kernel of the $G_x$-action on $S$ and we identify $p(S)=S/G_x$ with the open subset $GS/G$ of $M/G$ via the homeomorphism induced by the $G$-equivariant diffeomorphism $G\times_{G_x}S\ni[g,x]\mapsto gx\in GS$ (\cite[Theorem 3.40]{2009arXiv0901.2374A}), then $(GS/G,S/(G_x/K),p_{|S})$ is an orbifold chart on $M/G$ of dimension $\dim M-\dim G$. Since all these charts are compatible at every intersection point (see the proof of \cite[Teorema 2.4.6]{will}), we obtain a canonical orbifold structure on $M/G$. (Alternatively, \cite[Section 2.2]{MR2012261} constructs the quotient orbifold $M/G$ without referring to the slice theorem.)


For the proof of Theorem \ref{theorem:quot-suborbi} we need the following lemma.

\begin{lemma}
  \label{lemma:C}
  Let $G$ be a Lie group and let $\Gamma\subset G$ be a non-empty finite subset. Then there is an open neighborhood $C$ of $e$ in $G$ with the following property: if $g_1,g_2\in C$ and $\gamma\in\Gamma$ such that $g_1^{-1}\gamma g_2\in\Gamma$, then $g_1^{-1}\gamma g_2=\gamma$.
\end{lemma}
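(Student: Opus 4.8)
The plan is to exploit continuity of the group operations to force products that stay inside the finite set $\Gamma$ to be locally constant. First I would fix, for each ordered pair $(\gamma,\gamma')\in\Gamma\times\Gamma$ with $\gamma\neq\gamma'$, the point $(\gamma',\gamma)$ in $G\times G$ and observe that the continuous map $\mu_{\gamma}\co G\times G\to G$, $(g_1,g_2)\mapsto g_1^{-1}\gamma g_2$, sends $(e,e)$ to $\gamma$. The key idea is that $\Gamma$ is finite, hence (since $G$ is Hausdorff) discrete as a subset of $G$, so each $\gamma\in\Gamma$ has an open neighborhood $O_\gamma$ in $G$ with $O_\gamma\cap\Gamma=\{\gamma\}$.

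For each $\gamma\in\Gamma$, continuity of $\mu_\gamma$ at $(e,e)$ gives an open neighborhood $C_\gamma$ of $e$ such that $\mu_\gamma(C_\gamma\times C_\gamma)=\{g_1^{-1}\gamma g_2;~g_1,g_2\in C_\gamma\}\subset O_\gamma$. Then I would set
\[
C:=\bigcap_{\gamma\in\Gamma}C_\gamma,
\]
which is an open neighborhood of $e$ (a finite intersection of open neighborhoods of $e$, using that $\Gamma$ is finite and nonempty). Now if $g_1,g_2\in C$ and $\gamma\in\Gamma$ with $g_1^{-1}\gamma g_2\in\Gamma$, then since $g_1,g_2\in C\subset C_\gamma$ we have $g_1^{-1}\gamma g_2\in O_\gamma$, so $g_1^{-1}\gamma g_2\in O_\gamma\cap\Gamma=\{\gamma\}$, forcing $g_1^{-1}\gamma g_2=\gamma$, as required.

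This argument is essentially routine; there is no serious obstacle. The only point that requires the slightest care is the use of finiteness of $\Gamma$ in two places: to guarantee that $\Gamma$ is discrete (so that each singleton $\{\gamma\}$ is the trace of an open set, which relies on $G$ being Hausdorff as a Lie group) and to guarantee that the intersection defining $C$ is finite and hence open. One could phrase the separation step more explicitly by choosing, for each $\gamma$, an open $O_\gamma\ni\gamma$ disjoint from the finite set $\Gamma\setminus\{\gamma\}$ via the Hausdorff property, which is automatic. No other subtlety arises, and the statement follows directly.
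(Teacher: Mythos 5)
Your proof is correct and follows essentially the same route as the paper: both use continuity of the map $(g_1,g_2)\mapsto g_1^{-1}\gamma g_2$ at $(e,e)$ together with finiteness of $\Gamma$ (you via a neighborhood $O_\gamma$ isolating $\gamma$ in $\Gamma$, the paper via closedness of the preimage of $\Gamma\setminus\{\gamma\}$ — the same topological fact phrased two ways) and then take the finite intersection $C=\bigcap_{\gamma\in\Gamma}C_\gamma$. The only cosmetic remark is that the ordered pairs $(\gamma',\gamma)$ fixed in your opening sentence are never used and could be deleted.
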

\begin{proof}
  Given $\gamma\in\Gamma$, consider the continuous map $f_\gamma\co G\times G\ni (g_1,g_2)\mapsto g_1^{-1}\gamma g_2\in G$. Since $f_\gamma^{-1}(\Gamma\setminus\{\gamma\})$ is closed and does not contain $(e,e)$, there is an open neighborhood $C_\gamma$ of $e$ such that $(C_\gamma\times C_\gamma)\cap f_\gamma^{-1}(\Gamma\setminus\{\gamma\})=\emptyset$.

  The intersection $C:=\bigcap_{\gamma\in\Gamma}C_\gamma$ has the desired property.
\end{proof}


\begin{theorem}
\label{theorem:quot-suborbi}
Let $G$ be a Lie group acting smoothly, almost freely and properly on a manifold $M$. If $H$ is a closed subgroup of $G$, $N$ is an $H$-submanifold of $M$ and $T_xN\cap T_x(Gx)=T_x(Hx)$ for every $x\in N$, then $\iota\co N/H\ni Hx\mapsto Gx\in M/G$ is an injective immersion.
\end{theorem}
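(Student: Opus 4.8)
The plan is to establish the two claimed properties of $\iota$ separately. Injectivity is essentially already available: if $Gx = Gy$ with $x,y \in N$, then $y \in N \cap Gx = Hx$ by Definition \ref{def:H-sub}, so $Hx = Hy$, giving injectivity of $\iota$. The substantive part is showing that $\iota$ is an immersion, which is a local statement about the induced maps between orbifold charts. So first I would fix $x \in N$ and use the slice theorem to produce a slice $S$ at $x$, which by definition is a full $G_x$-submanifold with $T_yM = T_yS \oplus T_y(Gy)$ for every $y \in S$. The associated chart $(GS/G,\, S/(G_x/K),\, p_{|S})$ around $Gx \in M/G$ is then the target chart in which to compute.

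Next I would construct a compatible chart on the source orbifold $N/H$ around $Hx$ and identify the lift of $\iota$. The natural candidate is to intersect $N$ with the slice: the transversality hypothesis $T_xN \cap T_x(Gx) = T_x(Hx)$ should guarantee, after shrinking $S$, that $S \cap N$ is a submanifold of $S$ serving as a slice-like neighborhood for the $H$-action. Concretely I would verify that $S \cap N$ is an $H_x$-submanifold of $S$ (using that $N$ is an $H$-submanifold of $M$ and Lemma \ref{lemma:stab}/Lemma \ref{lemma:closed} to pass to the finite group $G_x$), so that $(H(S\cap N)/H,\, (S\cap N)/(H_x/K'),\, \cdot)$ is a chart on $N/H$. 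The lift of $\iota$ in these charts is then simply the inclusion $\widetilde{\iota}\co S \cap N \hookrightarrow S$ at the level of slices, covering the group inclusion $H_x/K' \to G_x/K$.

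With the charts in place, showing $\iota$ is an immersion reduces to showing the lift $\widetilde{\iota}$ has rank $\dim(N/H) = \dim N - \dim(Hx) = \dim M - \dim G$ wait; more carefully, $\rk \iota(Hx)$ is the rank of the inclusion $S \cap N \hookrightarrow S$ at $x$, and this inclusion is a smooth embedding of manifolds, hence automatically an immersion of the correct rank $\dim(S \cap N)$. The transversality condition is exactly what makes the dimension count work out: since $T_x(S\cap N) = T_xS \cap T_xN$ and $T_yM = T_yS \oplus T_y(Gy)$, the hypothesis $T_xN \cap T_x(Gx) = T_x(Hx)$ ensures $\dim(S \cap N) = \dim N - \dim(Hx)$, which is precisely $\dim(N/H)$. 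Thus the lift is a genuine immersion and $\iota$ is an immersion.

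The main obstacle will be the careful construction of the source chart and the verification that the inclusion of slices is genuinely a chart-compatible lift of $\iota$; in particular, one must check that $S \cap N$ is $H_x$-invariant and a full $H_x$-submanifold (so the slice machinery applies to the $H$-action), and that shrinking $S$ does not destroy the identification of $p(S \cap N)$ with an open neighborhood of $Hx$ in $N/H$ under $\iota$. The transversality hypothesis is doing double duty here: it controls the tangent space so the dimension count is correct, and (together with almost freeness and properness of the restricted $H$-action) it should guarantee that $S\cap N$ is transverse enough to the $G$-orbits that the slice structure for $G$ restricts to a slice structure for $H$. I would expect the bookkeeping of kernels $K$ and $K'$ and the equivariance of $\widetilde{\iota}$ over the group monomorphism to require the most care.
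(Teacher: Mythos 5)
Your overall strategy (localize in slice charts and exhibit an equivariant lift of $\iota$) is the same as the paper's, and your injectivity argument is fine, but the construction of the lift has a genuine gap: you take the lift to be the inclusion $S\cap N\hookrightarrow S$, claiming that after shrinking $S$ the intersection $S\cap N$ is a slice for the $H$-action on $N$ with $T_x(S\cap N)=T_xS\cap T_xN$ and $\dim(S\cap N)=\dim N-\dim(Hx)$. None of this follows from the hypotheses, and shrinking $S$ cannot repair it, because the slice theorem gives no control over the position of $S$ relative to $N$ away from the point $x$. Concretely, let $M=\R^2$, let $G=\R$ act by translation in the first coordinate, $H=\{e\}$ and $N=\{0\}\times\R$; the hypothesis $T_xN\cap T_x(Gx)=T_x(Hx)=\{0\}$ holds, and $S=\{(t^3,t);~|t|<1\}$ is a legitimate slice at $x=(0,0)$, yet $S\cap N=\{(0,0)\}$ is $0$-dimensional: its image in $N/H\cong\R$ is not open, and $T_x(S\cap N)=\{0\}$ differs from $T_xS\cap T_xN$, which is $1$-dimensional. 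What your argument actually needs is an \emph{adapted} slice, one satisfying $T_xN\subset T_xS+T_x(Hx)$ (plus a clean-intersection property near $x$); this is not implied by $T_xN\cap T_x(Gx)=T_x(Hx)$, since that hypothesis constrains the intersection of $T_xN$ with the orbit direction, not the projection of $T_xN$ onto $T_x(Gx)$ along $T_xS$. Proving that such an adapted slice always exists ($G_x$-invariance included) would be a nontrivial extra lemma that your proposal does not supply, and your closing remark that transversality ``should guarantee that the slice structure for $G$ restricts to a slice structure for $H$'' is precisely the assertion that fails.

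The paper avoids this problem by decoupling the two slices. It takes an arbitrary slice $S$ for $G$ at $x$, uses Lemma \ref{lemma:C} (with $\Gamma=G_x$) to get a neighborhood $C$ of $e$ in $G$ such that $\mu\co C\times S\to CS\subset M$, $\mu(g,y)=gy$, is a diffeomorphism onto an open set, and then chooses a slice $T$ for the $H$-action on $N$ inside the open set $CS\cap N$ --- crucially, $T$ is \emph{not} required to lie in $S$. The lift is $\widetilde{\iota}=\mathrm{pr}_S\circ\mu^{-1}|_T\co T\to S$, i.e.\ points of $T$ are slid along the group directions into $S$. The transversality hypothesis then enters in a different place than in your proposal: if $d\widetilde{\iota}_zX=0$, then $X$ is tangent to the orbit $Gz$, hence $X\in T_z(Gz)\cap T_zT=T_zN\cap T_z(Gz)\cap T_zT=T_z(Hz)\cap T_zT=\{0\}$ by the hypothesis together with the slice property of $T$; and Lemma \ref{lemma:C} is what makes $\widetilde{\iota}$ equivariant with respect to $H_x\hookrightarrow G_x$. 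If you want to salvage your version, you would first have to prove the adapted-slice statement above; as written, the step ``$S\cap N$ is a slice for $H$'' is where the proof breaks down.
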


\begin{proof}
  First note that the smooth action of $H$ on $N$ is almost free because $H_x=G_x\cap H$ and proper since $H$ is closed in $G$. In particular, $N/H$ is an orbifold with the canonical structure given above Lemma \ref{lemma:C}. $\iota$ is injective since $N$ is an $H$-submanifold of $M$.

  To see that $\iota$ is an immersion, let $x\in N$. Then there is a slice $S\subset M$ at $x$ for the $G$-action on $M$. Let $C$ be an open neighborhood of $e$ in $G$ with the property from Lemma \ref{lemma:C} with respect to $G$ and $\Gamma=G_x$. This property guarantees that the restriction of the $G$-action on $M$ to $C\times S$ gives a smooth injective map $\mu\co C\times S\to M$, which is seen to be an immersion by an elementary calculation using that $T_yS\cap T_y(Gy)=\{0\}$ for every $y\in S$. For dimension reasons $\mu$ is a diffeomorphism onto its (open) image $CS$. Since $CS\cap N$ is open in $N$, there is a slice $T\subset CS\cap N$ at $x$ for the induced $H$-action on $N$. We write $p_H\co N\to N/H$ and $p_G\co M\to M/G$ for the canonical quotient maps and consider the charts $(HT/H,T/(H_x/L),{p_H}_{|T})$ on $N/H$ and
  $(GS/G,S/(G_x/K),{p_G}_{|S})$ on $M/G$ (where $L\subset H_x$,
  and $K\subset G_x$ denote the respective kernels). 

  We shall now construct an immersion $\widetilde{\iota}\co T\to S$ lifting $\iota\co HT/H\to GS/G$ as in Definition \ref{def:smooth} (\ref{def:smooth:i}) (with the corresponding homomorphism given by the inclusion $H_x\hookrightarrow G_x$). Define
  \[\widetilde{\iota}\co T\hookrightarrow CS\stackrel{\mu^{-1}}{\longrightarrow} C\times S\stackrel{\text{pr}_S}{\longrightarrow}S\]
  and observe that $p_G\circ\widetilde{\iota}=\iota\circ p_H$ on $T$: if $g\in C$, $y\in S$ such that $gy\in T$, then
  \[p_G\circ\widetilde{\iota}(gy)=p_G\circ \text{pr}_S\circ\mu^{-1}(gy)=p_G(y)=p_G(gy)=\iota\circ p_H(gy).\]
  To see that $\widetilde{\iota}$ is an immersion, fix $z\in T$ and let $X\in T_zT$ such that $d\widetilde{\iota}_zX=0$. The latter condition implies $X\in T_z(Gz)$. Since
  \[T_z(Gz)\cap T_zT=T_zN\cap T_z(Gz)\cap T_zT =T_z(Hz)\cap T_zT=\{0\},\]
  we conclude that $X=0$ and hence $\eta$ is an immersion.

  To see that $\eta$ is equivariant with respect to $H_x$, let $h\in H_x$ and $z\in T$. If $g_1,g_2\in C$ and $y_1,y_2\in S$ are such that $z=g_1y_1$ and $hz=g_2y_2$, then $g_2^{-1}hg_1y_1=y_2$ and hence $g_2^{-1}hg_1\in G_x$. By the choice of $C$, we obtain $g_2^{-1}hg_1=h$ and hence
  \[\eta(hz)=y_2=g_2^{-1}hz=h g_1^{-1}z=hy_1=h\eta(z).\]

\end{proof}

\begin{remark}
  Note that we can omit the condition $T_xN\cap T_x(Gx)=T_x(Hx)~\forall x\in N$ in the theorem above if $G$ is discrete or if $N$ is transverse to all $G$-orbits. In either case the condition will be satisfied automatically. With respect to the general case, we admit that we are not aware of any example which could show the necessity of that condition above.
\end{remark}

\begin{corollary}
Let $G$ be a Lie group acting smoothly, almost freely and properly on a manifold $M$. If $H$ is a closed subgroup of $G$, $N$ is an $H$-submanifold of $M$ and $T_xN\cap T_x(Gx)=T_x(Hx)$ for every $x\in N$ and $\iota\co N/H\ni Hx\mapsto Gx\in M/G$ is a topological embedding, then $\iota$ is an orbifold embedding.
\end{corollary}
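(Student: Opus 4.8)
The plan is simply to combine Theorem \ref{theorem:quot-suborbi} with the definition of an orbifold embedding; essentially nothing new needs to be proved. First I would observe that the hypotheses of the corollary are exactly those of Theorem \ref{theorem:quot-suborbi}, namely that $G$ acts smoothly, almost freely and properly on $M$, that $H$ is a closed subgroup, that $N$ is an $H$-submanifold, and that $T_xN\cap T_x(Gx)=T_x(Hx)$ for every $x\in N$, together with the single extra assumption that $\iota$ is a topological embedding. Applying Theorem \ref{theorem:quot-suborbi} directly, I conclude that $\iota\co N/H\to M/G$ is an injective immersion of orbifolds.

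Next I would recall that, by Definition \ref{def:immersion}, an orbifold embedding is by definition an immersion which is simultaneously a topological embedding between the underlying spaces. The first property is supplied by the theorem: the lift $\widetilde{\iota}\co T\to S$ constructed in its proof is a manifold immersion with $\dim T=\dim N-\dim H=\dim(N/H)$, so $\iota$ has constant rank $\dim(N/H)$, which is precisely the orbifold notion of an immersion. The second property is exactly the additional hypothesis of the corollary. Hence both defining conditions hold and $\iota$ is an orbifold embedding.

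Since the two required properties are literally the conclusion of the theorem and the added hypothesis, I do not expect any genuine obstacle: the corollary is a formal consequence of Theorem \ref{theorem:quot-suborbi} and Definition \ref{def:immersion}. The only point worth confirming is the (already implicit) fact that the word \emph{immersion} in the statement of Theorem \ref{theorem:quot-suborbi} refers to the orbifold notion of Definition \ref{def:immersion} rather than merely to the manifold lift, and this is clear from the proof of that theorem, where the rank of $\widetilde{\iota}$ is computed to equal $\dim(N/H)$.
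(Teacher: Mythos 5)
Your proposal is correct and is exactly the paper's intended argument: the paper states this corollary without any proof, treating it as an immediate consequence of Theorem \ref{theorem:quot-suborbi} (which supplies the injective orbifold immersion) combined with Definition \ref{def:immersion} and the added hypothesis that $\iota$ is a topological embedding. Your extra remark confirming that ``immersion'' in the theorem means the orbifold notion (constant rank $\dim(N/H)$, witnessed by the lift $\widetilde{\iota}$) is a sensible sanity check and matches the paper's usage.
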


Note that the corollary above applies in the case that $G$ is finite and $N$ is closed (by Lemma \ref{lemma:embedding}) and also to the case that $N/H$ is compact.

\section{Alternative characterizations of full and embedded suborbifolds}
\label{sec:alt}
In this section we give alternative characterizations of full and embedded suborbifolds to illustrate in which sense our definition of a full suborbifold corresponds to the homonymous notion in \cite{zbMATH06513434} and that the idea of a ``split'' suborbifold in \cite{zbMATH06513434} actually gives just another characterization of an embedded suborbifold. We also explain how to verify two examples of suborbifolds from \cite{zbMATH06513434} which are not full or not embedded, respectively.

The following characterization of full suborbifolds shows that our definition of this term corresponds to the idea of a full suborbifold in \cite{zbMATH06513434}. When denoting isotropy groups using our $\text{Iso}(p)$-notation, we will add the orbifold name as a subscript to avoid ambiguities.

\begin{proposition}
  \label{proposition:full}
  Let $\mathcal{O}$ be an orbifold, $0\le k\le\dim\mathcal{O}$ and let $\mathcal{P}\subset\mathcal{O}$ be a subset. The following are equivalent.
  \begin{enumiii}
  \item \label{proposition:full:1} $\mathcal{P}$ is a $k$-dimensional full suborbifold of $\mathcal{O}$ (in the sense of Definition \ref{def:suborbi} (\ref{def:suborbi:full})).
  \item \label{proposition:full:2} For every $p\in\mathcal{P}$ there is a chart $(U,\widetilde{U}/\Gamma,\pi)$ of $\mathcal{O}$ with the properties that $\Gamma\simeq\Iso_{\mathcal{O}}(p)$ and that there is a $k$-dimensional $\Gamma$-invariant submanifold $\widetilde{V}\subset\widetilde{U}$ such that $\pi(\widetilde{V})$ is an open neighborhood of $p$ in $\mathcal{P}$.
  \end{enumiii}
\end{proposition}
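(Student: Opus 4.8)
The plan is to treat the two implications separately, with (\ref{proposition:full:1})$\Rightarrow$(\ref{proposition:full:2}) carrying almost all of the work. For (\ref{proposition:full:2})$\Rightarrow$(\ref{proposition:full:1}) I would simply take $\Delta=\Gamma$ in Definition \ref{def:suborbi}: if $\widetilde V$ is $\Gamma$-invariant then $\widetilde V\cap\Gamma x=\Gamma x=\Delta x$ for each $x\in\widetilde V$, so $\widetilde V$ is a $\Gamma$-submanifold, and the condition defining a full $\Gamma$-submanifold (if $\gamma\in\Gamma$, $x\in\widetilde V$ and $\gamma x\in\widetilde V$, then $\gamma\in\Gamma$) holds vacuously. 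Hence $\widetilde V$ is a full $\Gamma$-submanifold and $\mathcal P$ is a full $k$-dimensional suborbifold; note that the hypothesis $\Gamma\simeq\Iso_{\mathcal O}(p)$ is not even used in this direction.

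For the converse, fix $p\in\mathcal P$ and choose $(U,\widetilde U/\Gamma,\pi)$, $\Delta\subset\Gamma$ and a full $\Delta$-submanifold $\widetilde V\subset\widetilde U$ as in Definition \ref{def:suborbi} (\ref{def:suborbi:full}); by the Remark following that definition I may assume $\widetilde V$ is connected and closed in $\widetilde U$. Pick a preimage $\widetilde p\in\widetilde V$ of $p$, so that $\Iso_{\mathcal O}(p)$ is the isomorphism class of $\Gamma_{\widetilde p}$. By Lemma \ref{lemma:stab} the stabilizers agree along $\widetilde V$; in particular $\Gamma_{\widetilde p}=\Delta_{\widetilde p}\subset\Delta$. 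The key idea is to replace the given chart by a ``small'' chart at $\widetilde p$ whose group is exactly $\Gamma_{\widetilde p}$. Since $\Gamma$ is finite, a standard argument produces a connected $\Gamma_{\widetilde p}$-invariant open neighborhood $\widetilde W$ of $\widetilde p$ with $\gamma\widetilde W\cap\widetilde W=\emptyset$ for all $\gamma\in\Gamma\setminus\Gamma_{\widetilde p}$. Then $(\pi(\widetilde W),\widetilde W/\Gamma_{\widetilde p},\pi_{|\widetilde W})$ is an orbifold chart of $\mathcal O$ --- it injects into $(U,\widetilde U/\Gamma,\pi)$ via the inclusion $\widetilde W\hookrightarrow\widetilde U$ and so lies in the maximal atlas --- and $\Gamma_{\widetilde p}\simeq\Iso_{\mathcal O}(p)$ by construction.

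The point needing care is the effectiveness of the $\Gamma_{\widetilde p}$-action on $\widetilde W$, which is part of the definition of an orbifold chart; this is the step I expect to be the main obstacle. I would handle it by fixing a $\Gamma$-invariant Riemannian metric on $\widetilde U$: an element $\gamma\in\Gamma_{\widetilde p}$ acting trivially on $\widetilde W$ is then an isometry of the connected manifold $\widetilde U$ equal to the identity on a nonempty open set, hence equal to the identity on all of $\widetilde U$, so $\gamma=e$ by effectiveness of the original $\Gamma$-action. Finally I would put $\widetilde V':=\widetilde V\cap\widetilde W$, a $k$-dimensional submanifold of $\widetilde W$; since $\Gamma_{\widetilde p}\subset\Delta$ and $\widetilde V$ is $\Delta$-invariant, $\widetilde V'$ is $\Gamma_{\widetilde p}$-invariant. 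It only remains to see that $\pi(\widetilde V')$ is an open neighborhood of $p$ in $\mathcal P$, which follows because the orbit map $\pi_{|\widetilde V}\co\widetilde V\to\pi(\widetilde V)$ is open (it is the open quotient map $\widetilde V\to\widetilde V/\Delta$ followed by the homeomorphism $\overline\phi$ of Proposition \ref{proposition:suborbis} (i)): as $\widetilde V'$ is open in $\widetilde V$, its image $\pi(\widetilde V')$ is open in the neighborhood $\pi(\widetilde V)$ of $p$, which is itself open in $\mathcal P$. This exhibits the chart and submanifold required by (\ref{proposition:full:2}), the only genuinely nonformal ingredient being the construction and legitimacy of the reduced chart at $\widetilde p$.
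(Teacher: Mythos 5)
Your proof is correct and follows essentially the same route as the paper's: the easy direction takes $\Delta=\Gamma$ (the fullness condition being vacuous), and the hard direction shrinks the given chart to a $\Gamma_{\widetilde{p}}$-invariant neighborhood whose group $\Gamma_{\widetilde{p}}\simeq\Iso_{\mathcal{O}}(p)$, using Lemma \ref{lemma:stab} to identify $\Gamma_{\widetilde{p}}=\Delta_{\widetilde{p}}\subset\Delta$ so that $\widetilde{V}\cap\widetilde{W}$ is $\Gamma_{\widetilde{p}}$-invariant. The only differences are presentational: you justify effectiveness of the restricted action and the openness of $\pi(\widetilde{V}\cap\widetilde{W})$ in $\mathcal{P}$ explicitly (via an invariant metric and the openness of the orbit map from Proposition \ref{proposition:suborbis}~(i)), whereas the paper leaves the former implicit and obtains the latter from the set identity $\pi^\prime(\widetilde{U}^\prime\cap\widetilde{V})=U^\prime\cap\pi(\widetilde{V})$.
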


\begin{proof}
  First note that in the local situation of (\ref{proposition:full:2}) $\widetilde{V}$ is a full $\Gamma$-submanifold of $\widetilde{U}$ and hence (\ref{proposition:full:2}) implies (\ref{proposition:full:1}).

  Now assume that $\mathcal{P}$ is a $k$-dimensional full suborbifold. To verify (\ref{proposition:full:2}) let $p\in\mathcal{P}$, let $(U,\widetilde{U}/\Gamma,\pi)$ be a chart of $\mathcal{O}$ around $p$, let $\Delta\subset\Gamma$ be a subgroup and let $\widetilde{V}$ be a $k$-dimensional full $\Delta$-submanifold of $\widetilde{U}$ such that $\pi(\widetilde{V})$ is an open neighborhood of $p$ in $\mathcal{P}$. Let $\widetilde{p}\in\pi^{-1}(p)\cap\widetilde{V}$ and let $\widetilde{U}^\prime$ be a $\Gamma_{\widetilde{p}}$-invariant open neighborhood of $\widetilde{p}$ in $\widetilde{U}$ such that $\widetilde{U}^\prime\cap\gamma\widetilde{U}^\prime=\emptyset$ for every $\gamma\in\Gamma\setminus\Gamma_{\widetilde{p}}$. Setting $\pi^\prime:=\pi_{|\widetilde{U}^\prime}\co\widetilde{U}^\prime\to\pi(\widetilde{U}^\prime)=:U^\prime$ and $\Gamma^\prime:=\Gamma_{\widetilde{p}}$, we obtain a chart $(U^\prime,\widetilde{U}^\prime/\Gamma^\prime,\pi^\prime)$ of $\mathcal{O}$. Then $\widetilde{V}^\prime:=\widetilde{U}^\prime\cap\widetilde{V}$ is a $k$-dimensional submanifold of $\widetilde{U}^\prime$. Since $\Gamma^\prime=\Delta_{\widetilde{p}}$ by Lemma \ref{lemma:stab}, $\widetilde{V}^\prime$ is $\Gamma^\prime$-invariant. Finally note that $\pi^\prime(\widetilde{V}^\prime)=\pi^\prime(\widetilde{U}^\prime\cap\widetilde{V})=U^\prime\cap\pi(\widetilde{V})$ is an open neighborhood of $p$ in $\mathcal{P}$.
\end{proof}

Note that for verifying that a suborbifold is not full it is not sufficient to provide a chart as in Definition \ref{def:suborbi} (\ref{def:suborbi:gen}) with $\widetilde{V}$ a non-full $\Delta$-suborbifold. For instance, the singular point $[0]$ in $\R/\{\pm 1\}$ is a full suborbifold, since $\{0\}$ is a full $\{\pm 1\}$-submanifold of the $\{\pm 1\}$-submanifold $\R$, but $\{0\}$ is not full as a $\{1\}$-submanifold of $\R$. (This example also illustrates that the definition of ``full'' in \cite{zbMATH06513434} depends on the concrete suborbifold structure and not just the subset. It seems that the notion of ``canonical structure'' defined in \cite{2016arXiv160605902B} may help overcome that ambiguity.)

The following proposition gives a useful criterion based on ideas already used in \cite[Example 10]{zbMATH06513434} (see the example below).

\begin{proposition}
  \label{prop:notfull}
  Let $\mathcal{O}$ be an orbifold and let $\mathcal{P}$ be a full suborbifold of $\mathcal{O}$. Moreover, let $(U,\widetilde{U}/\Gamma,\pi)$ be a chart on $\mathcal{O}$ such that $\Gamma$ is abelian, let $\Delta$ be a subgroup of $\Gamma$ and $\widetilde{V}$ a (not necessarily full) connected $\Delta$-submanifold of $\widetilde{U}$ such that $\pi(\widetilde{V})$ is an open subset of $\mathcal{P}$. Then for $\Omega:=\{\gamma\in\Gamma;~\gamma x=x~\forall x\in\widetilde{V}\}$, $p\in V$ and $\widetilde{p}\in\pi^{-1}(p)\cap \widetilde{V}$,
  there is an isomorphism
  \[\Iso_{\mathcal{P}}(p)\simeq\Gamma_{\widetilde{p}}/\Omega.\]
\end{proposition}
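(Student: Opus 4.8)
The plan is to compute $\Iso_{\mathcal{P}}(p)$ from the canonical suborbifold chart provided by Proposition \ref{proposition:suborbis} and to match it with $\Gamma_{\widetilde{p}}/\Omega$ through the inclusion of the relevant groups. First I would record the isotropy. Passing to a $\Gamma_{\widetilde{p}}$-invariant neighborhood of a chosen $\widetilde{p}\in\pi^{-1}(p)\cap\widetilde{V}$ in the spirit of Lemma \ref{lemma:closed}, one may present $\mathcal{P}$ near $p$ by the chart $(V,\widetilde{V}/(\Delta/K),\pi_{|\widetilde{V}})$, where $K$ is the kernel of the $\Delta$-action on $\widetilde{V}$; since $K\subset\Delta_{\widetilde{p}}$ and $\widetilde{p}$ is fixed by $\Delta_{\widetilde{p}}$, this gives $\Iso_{\mathcal{P}}(p)\simeq\Delta_{\widetilde{p}}/K$. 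Now observe that $K=\Delta\cap\Omega$ and that $\Omega\subset\Gamma_{\widetilde{p}}$ (an element fixing $\widetilde{V}$ pointwise fixes $\widetilde{p}$); here the hypothesis that $\Gamma$ is abelian guarantees $\Omega\trianglelefteq\Gamma_{\widetilde{p}}$, so that $\Gamma_{\widetilde{p}}/\Omega$ is genuinely a group. The inclusion $\Delta_{\widetilde{p}}\hookrightarrow\Gamma_{\widetilde{p}}$ then descends to an injective homomorphism $\Delta_{\widetilde{p}}/K\hookrightarrow\Gamma_{\widetilde{p}}/\Omega$, and the whole statement reduces to showing surjectivity, i.e.\ the identity $\Gamma_{\widetilde{p}}=\Delta_{\widetilde{p}}\cdot\Omega$.

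The key geometric input, and the step I expect to be the main obstacle, is that fullness forces $\Gamma_{\widetilde{p}}$ to preserve $\widetilde{V}$ setwise near $\widetilde{p}$. For this I would invoke Proposition \ref{proposition:full} to obtain a chart $(U',\widetilde{U}'/\Gamma',\pi')$ around $p$ with $\Gamma'\simeq\Iso_{\mathcal{O}}(p)$ and a $\Gamma'$-invariant submanifold $\widetilde{V}'$ such that $\pi'(\widetilde{V}')$ is an open neighborhood of $p$ in $\mathcal{P}$, and then compare it with the given chart through a common refinement $(W,\widetilde{W}/\Sigma,\eta)$ with injections $\lambda$ into $\widetilde{U}$ and $\lambda'$ into $\widetilde{U}'$. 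Choosing the refinement minimal (so $\Sigma\simeq\Iso_{\mathcal{O}}(p)$) and normalizing $\lambda$ by an element of $\Gamma$ so that $\lambda(\widetilde{q})=\widetilde{p}$ for a suitable $\widetilde{q}\in\eta^{-1}(p)$, standard properties of injections give $\overline{\lambda}(\Sigma)=\Gamma_{\widetilde{p}}$ and $\overline{\lambda'}(\Sigma)=\Gamma'$. The lift $\eta^{-1}(\mathcal{P})$ near $\widetilde{q}$ agrees with $(\lambda')^{-1}(\widetilde{V}')$, which is $\Sigma$-invariant because $\widetilde{V}'$ is $\Gamma'$-invariant; comparing dimensions, the connected lift $\lambda^{-1}(\widetilde{V})$ is an open piece of this $\Sigma$-invariant submanifold through $\widetilde{q}$, hence itself $\Sigma$-invariant near $\widetilde{q}$. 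Transporting by $\lambda$ via $\lambda(\sigma x)=\overline{\lambda}(\sigma)\lambda(x)$, I conclude that $\widetilde{V}$ is $\overline{\lambda}(\Sigma)=\Gamma_{\widetilde{p}}$-invariant near $\widetilde{p}$.

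With this invariance in hand, surjectivity follows by a connectedness argument. Fix a $\Gamma$-invariant Riemannian metric on $\widetilde{U}$ and let $\gamma\in\Gamma_{\widetilde{p}}$. For $x\in\widetilde{V}$ near $\widetilde{p}$ we have $\gamma x\in\gamma\widetilde{V}\cap\Gamma x=\widetilde{V}\cap\Gamma x=\Delta x$ by the $\Delta$-submanifold property, so $\gamma x=\delta_x x$ for some $\delta_x\in\Delta$. For each of the finitely many $\delta\in\Delta$ the set $\{x\in\widetilde{V};~\delta^{-1}\gamma x=x\}$ is the fixed-point set of an isometry of $\widetilde{V}$, hence closed and, unless it equals all of $\widetilde{V}$, nowhere dense; since these sets cover the connected manifold $\widetilde{V}$, the Baire property forces one of them to equal $\widetilde{V}$. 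Thus there is $\delta\in\Delta$ with $\delta^{-1}\gamma$ acting trivially on $\widetilde{V}$, i.e.\ $\delta^{-1}\gamma\in\Omega$; evaluating at $\widetilde{p}$ shows $\delta\in\Delta_{\widetilde{p}}$, whence $\gamma=\delta\,(\delta^{-1}\gamma)\in\Delta_{\widetilde{p}}\cdot\Omega$. This establishes surjectivity and therefore the isomorphism $\Iso_{\mathcal{P}}(p)\simeq\Gamma_{\widetilde{p}}/\Omega$.
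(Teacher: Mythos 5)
Your bookkeeping differs from the paper's in a reasonable way, and much of it is correct. You compute $\Iso_{\mathcal{P}}(p)\simeq\Delta_{\widetilde{p}}/K$ from the given chart (via Lemma \ref{lemma:closed} and Proposition \ref{proposition:suborbis}) and reduce the statement to the decomposition $\Gamma_{\widetilde{p}}=\Delta_{\widetilde{p}}\cdot\Omega$, whereas the paper computes $\Iso_{\mathcal{P}}(p)\simeq\Gamma^\prime/K^\prime$ from the full chart supplied by Proposition \ref{proposition:full} and then shows $\overline{\mu}(\Gamma^\prime)=\Gamma_{\widetilde{p}}$ and $\overline{\mu}(K^\prime)=\Omega$ for an injection $\mu$; your use of Proposition \ref{proposition:full} together with a common refinement to obtain a $\Gamma_{\widetilde{p}}$-invariant neighborhood of $\widetilde{p}$ in $\widetilde{V}$ plays the role of the paper's normalization of $\mu$. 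Two remarks before the main point: the relation $\gamma x\in\Delta x$ is only established for $x$ in that invariant neighborhood, so your sets $\{x\in\widetilde{V};~\delta^{-1}\gamma x=x\}$ cover only a neighborhood of $\widetilde{p}$ in $\widetilde{V}$, not all of $\widetilde{V}$; Baire must be applied there (harmless: it still produces $\delta\in\Delta$ such that $\delta^{-1}\gamma$ fixes a nonempty open subset of $\widetilde{V}$ pointwise). Also, you use the abelian hypothesis only for normality of $\Omega$, while the paper uses it to transport pointwise fixing; this asymmetry is a warning sign.

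The genuine gap is the dichotomy you invoke to finish. The set $\{x\in\widetilde{V};~\delta^{-1}\gamma x=x\}$ is \emph{not} the fixed-point set of an isometry \emph{of} $\widetilde{V}$: elements of $\Delta$ preserve $\widetilde{V}$, but $\delta^{-1}\gamma$ has only been shown to map a neighborhood of $\widetilde{p}$ in $\widetilde{V}$ into $\widetilde{V}$, not $\widetilde{V}$ into itself. For an ambient isometry, the intersection of its fixed-point set with $\widetilde{V}$ can be closed with nonempty interior and still proper. Concretely, let $\widetilde{U}$ be the open disk, $\Gamma\simeq\Z_2$ acting by reflection in the horizontal axis, and $\widetilde{V}$ a connected curve that runs along the axis for a while and then leaves it into the open upper half-plane; then $\widetilde{V}$ is a connected $\{e\}$-submanifold, the reflection fixes an open arc of $\widetilde{V}$ pointwise, yet $\Omega=\{e\}$. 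In this example every ingredient your argument actually uses is available (fullness \emph{at} $p=\pi(0)$ in the sense of Proposition \ref{proposition:full}, local $\Gamma_{\widetilde{p}}$-invariance near $\widetilde{p}=0$, the covering near $\widetilde{p}$), and the claimed isomorphism fails at $p$: here $\mathcal{P}=\pi(\widetilde{V})$ is not a full suborbifold, but fullness only breaks down at the image of the point where the curve leaves the axis. Hence any complete proof must invoke fullness of $\mathcal{P}$ at points of $\pi(\widetilde{V})$ \emph{other than} $p$, which your argument never does. A repair: let $A$ be the interior in $\widetilde{V}$ of the fixed set of $\delta^{-1}\gamma$ and let $x_0\in\widetilde{V}$ lie in the boundary of $A$; fullness at $\pi(x_0)$ yields (via Proposition \ref{proposition:full} and an injection) a $\Gamma_{x_0}$-invariant submanifold $\widetilde{W}$ containing $\widetilde{V}$ near $x_0$ as an open subset; since $\delta^{-1}\gamma$ fixes $x_0$, it preserves the component of $\widetilde{W}$ through $x_0$ and restricts there to an isometry fixing a nonempty open set pointwise, hence the identity; so $x_0\in A$, and $A$ is open and closed in the connected $\widetilde{V}$, giving $\delta^{-1}\gamma\in\Omega$. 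To be fair, the paper's own proof is very terse at this exact spot (``since $\widetilde{V}$ is connected and $\gamma$ is an isometry, we obtain $\gamma\in\Omega$'') and requires the same kind of care; but the explicit justification you give is, as stated, false.
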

\begin{proof}
  By Proposition \ref{proposition:full}, there is a chart $(U^\prime,\widetilde{U}^\prime/\Gamma^\prime,\pi^\prime)$ of $\mathcal{O}$ such that $\Gamma^\prime\simeq\Iso_{\mathcal{O}}(p)$ and there is a $k$-dimensional $\Gamma^\prime$-invariant submanifold $\widetilde{V}^\prime\subset\widetilde{U}^\prime$ such that $\pi(\widetilde{V}^\prime)$ is an open neighborhood of $p$ in $\mathcal{P}$. Choosing $U^\prime$ sufficiently small, we can assume that $U^\prime\subset U$ and $V^\prime\subset V$ and that there is an injection $\mu\co (U^\prime,\widetilde{U}^\prime/\Gamma^\prime,\pi^\prime)\to (U,\widetilde{U}/\Gamma,\pi)$. Substituting $\widetilde{V}^\prime$ by the connected component of the unique $\widetilde{p}^\prime\in\pi^{-1}(p)$ in $\widetilde{V}^\prime$, we can assume that $\widetilde{V}^\prime$ is connected. Then $\Iso_{\mathcal{P}}(p)\simeq \Gamma^\prime/K^\prime$ (with $K^\prime\subset\Gamma^\prime$ denoting the kernel of the $\Gamma^\prime$-action on $\widetilde{V}^\prime$). Composing $\mu$ by an appropriate element of $\Gamma$ if necessary, we can guarantee that $\mu(\widetilde{V}^\prime)$ contains $\widetilde{p}$.

Since $\mu(\widetilde{U}^\prime)$ 
contains $\widetilde{p}$, we easily verify that $\overline{\mu}(\Gamma^\prime)=\Gamma_{\widetilde{p}}$. To see that $\overline{\mu}(K^\prime)=\Omega$, first let $\gamma\in\overline{\mu}(K^\prime)$ and equip $\widetilde{U}$ with an $\Gamma$-invariant Riemannian metric. Then $\gamma$ fixes $\mu(\widetilde{V}^\prime)$ and, since $\Gamma$ is abelian, it also fixes the open subset $\pi^{-1}(V^\prime)\cap\widetilde{V}$ of $\widetilde{V}$. Since $\widetilde{V}$ is connected and $\gamma$ is an isometry, we obtain $\gamma\in\Omega$. On the other hand, let $\gamma\in\Omega\subset\Gamma_{\widetilde{p}}=\overline{\mu}(\Gamma^\prime)$ and $\gamma^\prime\in\Gamma^\prime$ such that $\gamma=\overline{\mu}(\gamma^\prime)$. Since $\Gamma$ is abelian and $V^\prime\subset V$, the transformation $\gamma$ fixes $\mu(\widetilde{V}^\prime)$ and hence $\gamma^\prime\in K^\prime$. Finally, we conclude that $\Iso_{\mathcal{P}}(p)\simeq\Gamma_{\widetilde{p}}/\Omega$. 
\end{proof}

\begin{example}[\cite{zbMATH06513434} Example 10]
  Let $M=\R\times\R$, $G=\{\pm 1\}\times\{\pm 1\}$, $N=\{(x,x);~x\in\R\}$ and $H=\{(1,1),(-1,-1)\}$. Then $\mathcal{P}=N/H$ is a suborbifold of $\mathcal{O}=M/G$, $G_{(0,0)}=G\simeq\Z_2\times\Z_2$ and $\Omega=\{g\in G;~g(x,x)=(x,x)~\forall (x,x)\in N\}=\{(1,1)\}$, but $\Iso_{\mathcal{P}}([(0,0)])\simeq H\simeq\Z_2$ is not isomorphic to $G_{(0,0)}/\Omega$. By Proposition \ref{prop:notfull}, $\mathcal{P}$ is not full in $\mathcal{O}$.
\end{example}

We shall now verify an example of a full suborbifold which is not embedded. Of course, to see that a suborbifold is not embedded, it is not sufficient to find a chart as in Definition \ref{def:suborbi} (\ref{def:suborbi:gen}) with $\Delta$ acting non-effectively (as can be illustrated again by the singular point $[0]$ in $\R/\{\pm 1\}$). The example below is taken from \cite[Example 12]{zbMATH06513434}.

\begin{example}
  Let $M:=\C^2$ equipped with the effective action by the group $G\simeq\Z_4$ generated by
  $\begin{pmatrix}
    i&0\\
    0&-1
  \end{pmatrix}$. Then $N:=\{0\}\times\C$ is a full $G$-submanifold of $M$ and hence $\mathcal{P}:=N/G$ a full suborbifold of $\mathcal{O}:=M/G$.

  Suppose that $\mathcal{P}$ is embedded. Then there is a chart $(U_1,\widetilde{U}_1/\Gamma_1,\pi_1)$ around $[(0,0)]$ on $\mathcal{P}$ and a chart $(U_2,\widetilde{U}_2/\Gamma_2,\pi_2)$ around $[(0,0)]$ on $\mathcal{O}$ together with a smooth embedding $\widetilde{f}\co\widetilde{U}_1\to\widetilde{U}_2$ and a homomorphism $\Theta\co\Gamma_1\to\Gamma_2$ such that $\pi\circ\widetilde{f}=\pi^\prime$ and $\widetilde{f}(\gamma x)=\Theta(\gamma)\widetilde{f}(x)$ for every $\gamma\in\Gamma_1$, $x\in\widetilde{U}_1$. Note that the latter condition implies that $\Theta$ is injective. Diminishing $U_1$ and $U_2$ if necessary, we can assume that there is an injection $\lambda$ from the local chart $(U_2,\widetilde{U}_2/\Gamma_2,\pi_2)$ into the global chart given by the quotient map $M\to M/G$ and 
that $\Gamma_1\simeq\Iso_{\mathcal{P}}([(0,0)])\simeq \Z_2$. 
Then $\overline{\lambda}(\Theta(\Gamma_1))\simeq\Z_2$ is the subgroup of $G\simeq\Z_4$ generated by $\begin{pmatrix}
    -1&0\\
    0&1
  \end{pmatrix}$. But this contradicts the fact that $\overline{\lambda}(\Theta(\Gamma_1))$ acts effectively on the open neighborhood $\lambda(\widetilde{f}(\widetilde{U}_1))$ of $(0,0)$ in $N$.
\end{example}

We should note that  \cite[Section 5]{zbMATH06513434} already contains the claim that the suborbifold above is not an image of a ``complete orbifold embedding'', apparently based on \cite[Theorem 1 (2)]{zbMATH06513434}. However, the property of being ``split'' in that theorem is only obvious for the suborbifold chart around each point in the image which appears in the definition of the embedding. Thus it is not clear if providing one (global) suborbifold chart which is not ``split'', as has been done in \cite[Example 12]{zbMATH06513434}, is sufficient for concluding that a suborbifold is not the image of an embedding.

We now give an alternative characterization of embedded suborbifolds, which shows that the idea of a ``split'' (and ``saturated'') suborbifold in \cite[Definition 6]{zbMATH06513434} corresponds to our notion of an embedded orbifold. The result resembles \cite[Theorem 1 (2)]{zbMATH06513434}.


\begin{proposition}
  \label{proposition:split}
  Let $\mathcal{O}$ be an orbifold, $0\le k\le\dim\mathcal{O}$ and $\mathcal{P}\subset\mathcal{O}$ a subset. The following are equivalent.
  \begin{enumiii}
  \item \label{proposition:embedded:1} $\mathcal{P}$ is a $k$-dimensional embedded suborbifold of $\mathcal{O}$ (in the sense of Definition \ref{def:suborbi} (\ref{def:suborbi:embedded})).
  \item \label{proposition:embedded:2} For every $p\in\mathcal{P}$ there is a chart $(U,\widetilde{U}/\Gamma,\pi)$ of $\mathcal{O}$ with the following property: there is a subgroup $\Delta\subset\Gamma$ and a connected $k$-dimensional $\Delta$-submanifold $\widetilde{V}\subset\widetilde{U}$ such that $\pi(\widetilde{V})$ is an open neighborhood of $p$ in $\mathcal{P}$ and, with $K$ denoting the kernel of the $\Delta$-action on $\widetilde{V}$, the canonical projection $\Delta\to\Delta/K$ has a homomorphic right inverse.
  \end{enumiii}
\end{proposition}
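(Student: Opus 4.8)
The implication (\ref{proposition:embedded:1}) $\Rightarrow$ (\ref{proposition:embedded:2}) is immediate and I would dispatch it first: if $\Delta$ acts effectively on $\widetilde{V}$, then the kernel $K$ is trivial, the canonical projection $\Delta\to\Delta/K$ is an isomorphism, and its inverse is a homomorphic right inverse. So the content of the proposition is the converse, and the plan is to keep the very same chart and $\widetilde{V}$ but shrink the acting group.

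For (\ref{proposition:embedded:2}) $\Rightarrow$ (\ref{proposition:embedded:1}), fix $p\in\mathcal{P}$ together with a chart $(U,\widetilde{U}/\Gamma,\pi)$, a subgroup $\Delta\subset\Gamma$, a connected $k$-dimensional $\Delta$-submanifold $\widetilde{V}\subset\widetilde{U}$, the kernel $K$ of the $\Delta$-action on $\widetilde{V}$, and a homomorphic right inverse $s\co\Delta/K\to\Delta$ of the projection $q\co\Delta\to\Delta/K$, all as in the hypothesis. I would set $\Delta^\prime:=s(\Delta/K)\subset\Delta\subset\Gamma$ and record the group-theoretic consequences of $s$ being a homomorphic section. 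Since $s$ is injective (it has $q$ as a left inverse) and $q\circ s=\Id$, the restriction $q|_{\Delta^\prime}$ is an isomorphism onto $\Delta/K$; in particular its kernel $\Delta^\prime\cap K$ is trivial, and every $\delta\in\Delta$ factors as $\delta=k\,s(q(\delta))$ with $k:=\delta\,s(q(\delta))^{-1}\in K$, so that $\Delta=K\Delta^\prime$.

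It then remains to verify that, with the same chart, $\widetilde{V}$ is a connected $k$-dimensional $\Delta^\prime$-submanifold on which $\Delta^\prime$ acts effectively (the openness of $\pi(\widetilde{V})$ in $\mathcal{P}$ is unchanged). Effectiveness is clear, since the kernel of the $\Delta^\prime$-action on $\widetilde{V}$ is precisely $\Delta^\prime\cap K=\{e\}$. The step I expect to carry the weight is the $\Delta^\prime$-submanifold condition, i.e.\ that $\widetilde{V}\cap\Gamma x=\Delta^\prime x$ for every $x\in\widetilde{V}$. Here I would use that $\widetilde{V}$ is already a $\Delta$-submanifold, so $\widetilde{V}\cap\Gamma x=\Delta x$, and then reduce $\Delta x$ to $\Delta^\prime x$ via the factorization $\Delta=K\Delta^\prime$: because $\Delta^\prime\subset\Delta$ preserves $\widetilde{V}$ we have $\Delta^\prime x\subset\widetilde{V}$, and because $K$ fixes $\widetilde{V}$ pointwise we get $\Delta x=K\Delta^\prime x=\Delta^\prime x$. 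Hence $\widetilde{V}\cap\Gamma x=\Delta^\prime x$, so $\widetilde{V}$ is a $\Delta^\prime$-submanifold and the chart $(U,\widetilde{U}/\Gamma,\pi)$ with $\Delta^\prime$ and $\widetilde{V}$ witnesses Definition \ref{def:suborbi} (\ref{def:suborbi:embedded}) at $p$. Since $p\in\mathcal{P}$ was arbitrary, $\mathcal{P}$ is a $k$-dimensional embedded suborbifold, giving (\ref{proposition:embedded:1}).
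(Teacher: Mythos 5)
Your proposal is correct and follows essentially the same route as the paper's proof: both directions are handled identically, with the converse obtained by passing to $\Delta^\prime=s(\Delta/K)$, the image of the homomorphic section, and checking that $\widetilde{V}$ is a $\Delta^\prime$-submanifold on which $\Delta^\prime$ acts effectively. The only cosmetic difference is bookkeeping — you package the key fact as the factorization $\Delta=K\Delta^\prime$ together with $\Delta^\prime\cap K=\{e\}$, while the paper verifies the same two points by the pointwise computation $\sigma([\delta])x=[\delta]x=\delta x$.
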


\begin{proof}
  Given an embedded suborbifold $\mathcal{P}$ and $p\in\mathcal{P}$, let $(U,\widetilde{U}/\Gamma,\pi)$, $\Delta$, $\widetilde{V}$ be as in Definition \ref{def:suborbi} (\ref{def:suborbi:embedded}). Since the kernel $K$ of the $\Delta$-action on $\widetilde{V}$ is trivial, $(U,\widetilde{U}/\Gamma,\pi)$ is a chart as in (\ref{proposition:embedded:2}) above. Hence (\ref{proposition:embedded:1}) implies (\ref{proposition:embedded:2}).

  Now let $\mathcal{P}$ satisfy (\ref{proposition:embedded:2}) and let $p\in\mathcal{P}$. Let $(U,\widetilde{U}/\Gamma,\pi)$, $\Delta$, $K$ and $\widetilde{V}$ be as (\ref{proposition:embedded:2}) and let $\sigma\co\Delta/K\to\Delta$ be a homomorphic right inverse of the projection $q=[\cdot]\co\Delta\to\Delta/K$. Let $\Delta^\prime$ denote the image of $\sigma$. To see that $\widetilde{V}$ is a $\Delta^\prime$-submanifold of $\widetilde{U}$, first note that $\widetilde{V}$ is obviously $\Delta^\prime$-invariant. If $\gamma\in\Gamma$, $x\in\widetilde{V}$ such that $\gamma x\in\widetilde{V}$, then there is $\delta\in\Delta$ such that $\gamma x=\delta x$ and hence $\sigma([\delta])x=q(\sigma([\delta]))x=[\delta]x=\delta x=\gamma x$.
  To see that $\Delta^\prime$ acts effectively on $\widetilde{V}$, let $\delta\in\Delta$ such that $\sigma([\delta])x=x$ for every $x\in\widetilde{V}$. Then $[\delta]x=q(\sigma([\delta]))x=\sigma([\delta])x=x$ for every $x\in\widetilde{V}$. Since $\Delta/K$ acts effectively on $\widetilde{V}$, we obtain $\sigma([\delta])=\sigma([e])=e$. Hence $\Delta^\prime$ acts effectively on $\widetilde{V}$ and we conclude that $\mathcal{P}$ is an embedded suborbifold.
\end{proof}

We should mention that the table in Section 5 of \cite{zbMATH06513434} suggests that Example 13 in that article provides suborbifolds in some broader sense to which our proposition above cannot be generalized, referred to as non-''saturated'' ``suborbifolds'' in \cite{zbMATH06513434}. However, that example does not specify how the (underlying spaces of the) alleged suborbifolds should be seen as subsets of each other and hence it is not clear if part (1) of the suborbifold definition (\cite[Definition 4]{zbMATH06513434}) is satisfied. 


\section{Transversality}
\label{sec:transversality}

We will now generalize classical notions of transversality to orbifolds. For basic results on transversality in the context of manifolds we refer the reader to \cite{zbMATH03562121}.

\begin{definition}
  \label{def:trans}
  Let $\mathcal{O}$ be an orbifold. Two full suborbifolds $\mathcal{P}_1,\mathcal{P}_2\subset\mathcal{O}$ are called \emph{transverse} if for every $p\in \mathcal{P}_1\cap\mathcal{P}_2$ there is a chart $(U,\widetilde{U}/\Gamma,\pi)$ of $\mathcal{O}$,  and for every $i=1,2$ there is a subgroup $\Delta_i\subset\Gamma$ and a full $\Delta_i$-submanifold $\widetilde{V}_i$ such that $\pi(\widetilde{V}_i)$ is an open subset of $\mathcal{P}_i$ and such that $\widetilde{V}_1$, $\widetilde{V}_2$ are transverse submanifolds intersecting in some point of $\pi^{-1}(p)$.
\end{definition}

\begin{example}
  \begin{enumiii}
  \item If $\mathcal{O}_1, \mathcal{O}_2$ are orbifolds and $\mathcal{P}_1\subset\mathcal{O}_1,\mathcal{P}_2\subset\mathcal{O}_2$ are full suborbifolds, then $\mathcal{P}_1\times\mathcal{O}_2$ and $\mathcal{O}_1\times\mathcal{P}_2$ are easily seen to be transverse suborbifolds of $\mathcal{O}_1\times \mathcal{O}_2$. 
  \item If $G$ is a discrete group acting smoothly and properly on a manifold $M$, $\Gamma$ is a subgroup of $G$ and $N_1$, $N_2$ are transverse full closed $\Gamma$-submanifolds of $M$, then $N_1/\Gamma$ and $N_2/\Gamma$ are transverse full suborbifolds of $M/G$: If $p\in (N_1/\Gamma) \cap (N_2/\Gamma)\subset M/G$, let $\widetilde{p}\in N_1\cap N_2$ such that $G\widetilde{p}=p$. Let $\widetilde{U}$ be a slice at (i.e., an open full $G_{\widetilde{p}}$-submanifold of $M$ containing) $\widetilde{p}$. Then $\pi\co \widetilde{U}\to\widetilde{U}/G_{\widetilde{p}}=:U\subset M/G$ defines a chart on $M/G$ and $\Delta_i:=\Gamma_{\widetilde{p}}=G_{\widetilde{p}}\cap\Gamma$, $\widetilde{V}_i:=\widetilde{U}\cap N_i$, $i=1,2$, satisfy the conditions from \ref{def:trans}.
  \end{enumiii}
\end{example}

\begin{theorem}
  \label{theorem:inter}
  If $\mathcal{O}$ is an $n$-dimensional orbifold and $\mathcal{P}_1,\mathcal{P}_2\subset\mathcal{O}$ are transverse full suborbifolds of dimension $k_1$, $k_2$, respectively, then $\mathcal{P}_1\cap\mathcal{P}_2$ is a full suborbifold of $\mathcal{O}$ of dimension $k_1+k_2-n$. 
\end{theorem}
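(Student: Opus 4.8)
The plan is to verify the suborbifold property pointwise, reducing the statement to the corresponding classical fact about transverse submanifolds in a single chart. Let $p\in\mathcal{P}_1\cap\mathcal{P}_2$. By the definition of transversality (Definition \ref{def:trans}), there is a single chart $(U,\widetilde{U}/\Gamma,\pi)$ of $\mathcal{O}$ around $p$ together with, for each $i=1,2$, a subgroup $\Delta_i\subset\Gamma$ and a full $\Delta_i$-submanifold $\widetilde{V}_i\subset\widetilde{U}$ such that $\pi(\widetilde{V}_i)$ is an open subset of $\mathcal{P}_i$ and $\widetilde{V}_1,\widetilde{V}_2$ are transverse submanifolds meeting at some $\widetilde{p}\in\pi^{-1}(p)$. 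The natural candidate for a suborbifold chart of $\mathcal{P}_1\cap\mathcal{P}_2$ is then $\widetilde{V}:=\widetilde{V}_1\cap\widetilde{V}_2$, which by classical transversality is a submanifold of $\widetilde{U}$ of dimension $k_1+k_2-n$ (after shrinking $\widetilde{U}$ so that the intersection is clean near $\widetilde{p}$). The group acting on it should be $\Delta:=\Delta_1\cap\Delta_2$, and I must show $\widetilde{V}$ is a full $\Delta$-submanifold whose image under $\pi$ is an open neighborhood of $p$ in $\mathcal{P}_1\cap\mathcal{P}_2$.

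First I would check that $\widetilde{V}$ is $\Delta$-invariant, which is immediate since each $\widetilde{V}_i$ is $\Delta_i$-invariant and $\Delta\subset\Delta_i$. Next I would establish the fullness condition from Definition \ref{def:full}: if $\gamma\in\Gamma$ and $x\in\widetilde{V}$ satisfy $\gamma x\in\widetilde{V}$, then in particular $x,\gamma x\in\widetilde{V}_i$ for each $i$, so fullness of each $\widetilde{V}_i$ gives $\gamma\in\Delta_1$ and $\gamma\in\Delta_2$, whence $\gamma\in\Delta$. This is the step that crucially uses that the $\widetilde{V}_i$ are \emph{full} $\Delta_i$-submanifolds rather than merely $\Delta_i$-submanifolds; an ordinary $H$-submanifold would only yield an element of $\Delta_i$ acting the same way as $\gamma$ on $x$, which would not immediately force $\gamma\in\Delta$. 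Having shown $\widetilde{V}$ is a full $\Delta$-submanifold, the resulting suborbifold (via Proposition \ref{proposition:suborbis} and Proposition \ref{proposition:full}) is automatically full.

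I would then verify that $\pi(\widetilde{V})$ is an open neighborhood of $p$ in $\mathcal{P}_1\cap\mathcal{P}_2$. The inclusion $\pi(\widetilde{V})\subset\pi(\widetilde{V}_1)\cap\pi(\widetilde{V}_2)\subset\mathcal{P}_1\cap\mathcal{P}_2$ is clear. For openness and the reverse inclusion near $p$, I would use that $\pi$ maps $\widetilde{U}$ onto the open set $U$ and that $\pi(\widetilde{V}_i)$ is open in $\mathcal{P}_i$; the main point is that a point $q\in U\cap\mathcal{P}_1\cap\mathcal{P}_2$ lifts to a point lying in both $\Gamma\widetilde{V}_1$ and $\Gamma\widetilde{V}_2$, and after applying a suitable element of $\Gamma$ one arranges a common lift in $\widetilde{V}_1\cap\widetilde{V}_2=\widetilde{V}$. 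Here the saturation condition $\pi^{-1}(\pi(\widetilde{V}_i))\cap\widetilde{U}=\Gamma\widetilde{V}_i\cap\widetilde{U}$ built into the $\Delta_i$-submanifold structure does the bookkeeping.

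The main obstacle I anticipate is precisely this last matching of lifts: reconciling the two a priori independent $\Gamma$-translates needed to put a given point of $\mathcal{P}_1\cap\mathcal{P}_2$ simultaneously into $\widetilde{V}_1$ and into $\widetilde{V}_2$. One must argue that after a single group element $\gamma\in\Gamma$ carries a lift into $\widetilde{V}_1$, that same lift already lies in $\widetilde{V}_2$ (or can be arranged to, using fullness to control the ambiguity in the choice of lift). I expect fullness of the $\widetilde{V}_i$ to be the key that removes the stabilizer ambiguity here, just as it did in the invariance/fullness step, so that the local picture reduces cleanly to the manifold statement that transverse submanifolds intersect in a submanifold of the expected codimension. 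Once the local chart $(\,\pi(\widetilde{V}),\widetilde{V}/(\Delta/K),\pi_{|\widetilde{V}}\,)$ is produced for each $p$, Proposition \ref{proposition:suborbis} guarantees these assemble into an orbifold structure, and Proposition \ref{proposition:full} identifies $\mathcal{P}_1\cap\mathcal{P}_2$ as full.
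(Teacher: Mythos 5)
Your proposal is correct and takes essentially the same route as the paper: the paper's proof uses the chart from Definition \ref{def:trans}, sets $\widetilde{V}:=\widetilde{V}_1\cap\widetilde{V}_2$ and $\Delta:=\Delta_1\cap\Delta_2$, cites classical transversality for the dimension count, and proves fullness of $\widetilde{V}$ by exactly your argument (fullness of each $\widetilde{V}_i$ forces $\gamma\in\Delta_1\cap\Delta_2$). The lift-matching step you single out as the main obstacle --- verifying that $\pi(\widetilde{V})$ is an open neighborhood of $p$ in $\mathcal{P}_1\cap\mathcal{P}_2$ --- is simply not addressed in the paper (it is treated as immediate), and your instinct for closing it is right: by Lemma \ref{lemma:stab} fullness gives $\Gamma_{\widetilde{x}}=(\Delta_i)_{\widetilde{x}}\subset\Delta_i$ at each $\widetilde{x}\in\widetilde{V}$, so any translate $\gamma\widetilde{V}_i$ containing a lift sufficiently close to $\widetilde{x}$ must contain $\widetilde{x}$ itself and hence coincide with $\widetilde{V}_i$, which produces the common lift (note that, as your own bookkeeping suggests, one only gets $\pi(\widetilde{V})\supset\pi(\widetilde{V}_1)\cap\pi(\widetilde{V}_2)$ \emph{near} $p$, not globally in the chart).
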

\begin{proof}
  Let $p\in\mathcal{P}_1\cap\mathcal{P}_2$ and let $(U,\widetilde{U}/\Gamma,\pi)$, $\Delta_i$, $\widetilde{V}_i$, $i=1,2$, be as in Definition \ref{def:trans}. Since $\widetilde{V}_1$, $\widetilde{V}_2$ are transverse submanifolds of dimension $k_1$, $k_2$, respectively, the non-empty intersection $\widetilde{V}:=\widetilde{V}_1\cap \widetilde{V}_2$ is a submanifold of dimension $k_1+k_2-n$ (see \cite{zbMATH03562121}).

Note that $\widetilde{V}$ is invariant under $\Delta:=\Delta_1\cap\Delta_2\subset\Gamma$. To conclude that $\widetilde{V}\subset\widetilde{U}$ is a full $\Delta$-submanifold, let $\gamma\in\Gamma$ and $x\in\widetilde{V}$ such that $\gamma x\in\widetilde{V}$. Since each $\widetilde{V}_i$ is a full $\Delta_i$-submanifold, we obtain $\gamma\in\Delta$.
\end{proof}

\begin{definition}
  \label{def:transmap}
  Let $f\co \mathcal{O}_1\to\mathcal{O}_2$ be a smooth map between orbifolds and let $\mathcal{Q}\subset\mathcal{O}_2$ be a $k$-dimensional full suborbifold. We say that $f$ is \emph{transverse} to $\mathcal{Q}$ (and write $f\pitchfork \mathcal{Q}$) if for every $p\in f^{-1}(\mathcal{Q})$ there are:
  \begin{itemize}
  \item a chart $(U_1,\widetilde{U}_1/\Gamma_1,\pi_1)$ on $\mathcal{O}_1$ around $p$,
  \item a chart $(U_2,\widetilde{U}_2/\Gamma_2,\pi_2)$ on $\mathcal{O}_2$ around $f(p)$ such that $\Gamma_2\simeq\Iso(f(p))$ and $f(U_1)\subset U_2$,
  \item a $k$-dimensional $\Gamma_2$-invariant submanifold $\widetilde{V}$ of $\widetilde{U}_2$ such that $\pi_2(\widetilde{V})$ is an open neighborhood of $f(p)$ in $\mathcal{Q}$,
  \item a smooth map $\widetilde{f}\co \widetilde{U}_1\to\widetilde{U}_2$ transverse to $\widetilde{V}$ such that $\pi_2\circ\widetilde{f}=f\circ\pi_1$,
  \item a homomorphism $\Theta\co \Gamma_1\to\Gamma_2$ such that $\widetilde{f}(\gamma x)=\Theta(\gamma)\widetilde{f}(x)$ for every $\gamma\in\Gamma_1$, $x\in\widetilde{U}_1$.
  \end{itemize}
\end{definition}

\begin{theorem}
  \label{theorem:preimage}
  Let $f\co \mathcal{O}_1\to\mathcal{O}_2$ be a smooth map between orbifolds and let $\mathcal{Q}\subset\mathcal{O}_2$ be a full suborbifold such that $f\pitchfork \mathcal{Q}$. Then $\mathcal{P}:=f^{-1}(\mathcal{Q})$ is empty or a full suborbifold of $\mathcal{O}_1$ of dimension $\dim\mathcal{O}_1-(\dim\mathcal{O}_2-\dim\mathcal{Q})$.
\end{theorem}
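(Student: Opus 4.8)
The plan is to reduce the statement to the classical preimage theorem for transverse maps and then observe that the resulting submanifold inside a chart is invariant under the whole local group. Assuming $\mathcal{P}:=f^{-1}(\mathcal{Q})$ is non-empty, I fix $p\in\mathcal{P}$ and invoke Definition \ref{def:transmap} to obtain charts $(U_i,\widetilde{U}_i/\Gamma_i,\pi_i)$, $i=1,2$, a $\Gamma_2$-invariant submanifold $\widetilde{V}\subset\widetilde{U}_2$ with $\pi_2(\widetilde{V})$ an open neighborhood of $f(p)$ in $\mathcal{Q}$, a lift $\widetilde{f}\co\widetilde{U}_1\to\widetilde{U}_2$ transverse to $\widetilde{V}$ with $\pi_2\circ\widetilde{f}=f\circ\pi_1$, and a homomorphism $\Theta\co\Gamma_1\to\Gamma_2$ satisfying $\widetilde{f}(\gamma x)=\Theta(\gamma)\widetilde{f}(x)$. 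The natural local model for $\mathcal{P}$ is $\widetilde{W}:=\widetilde{f}^{-1}(\widetilde{V})$. Since $\widetilde{f}\pitchfork\widetilde{V}$, the classical preimage theorem (\cite{zbMATH03562121}) shows $\widetilde{W}$ is a submanifold of $\widetilde{U}_1$ of codimension $\dim\widetilde{U}_2-\dim\widetilde{V}=\dim\mathcal{O}_2-\dim\mathcal{Q}$, hence of dimension $\dim\mathcal{O}_1-(\dim\mathcal{O}_2-\dim\mathcal{Q})$, exactly the asserted dimension. Moreover $\widetilde{W}$ contains the whole fiber $\pi_1^{-1}(p)$: for $\widetilde{p}\in\pi_1^{-1}(p)$ we have $\pi_2(\widetilde{f}(\widetilde{p}))=f(p)\in\pi_2(\widetilde{V})$, and the $\Gamma_2$-invariance of $\widetilde{V}$ (which gives $\pi_2^{-1}(\pi_2(\widetilde{V}))=\widetilde{V}$) forces $\widetilde{f}(\widetilde{p})\in\widetilde{V}$.

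Next I would observe that $\widetilde{W}$ is invariant under the full group $\Gamma_1$: for $x\in\widetilde{W}$ and $\gamma\in\Gamma_1$ the equivariance of $\widetilde{f}$ gives $\widetilde{f}(\gamma x)=\Theta(\gamma)\widetilde{f}(x)\in\widetilde{V}$ by $\Gamma_2$-invariance, so $\gamma x\in\widetilde{W}$. A $\Gamma_1$-invariant submanifold is automatically a full $\Gamma_1$-submanifold in the sense of Definition \ref{def:full}, since the defining implication becomes vacuous when the subgroup equals the ambient group. Thus, taking $\Delta:=\Gamma_1$ and using $\widetilde{W}$ in the role of the submanifold of Definition \ref{def:suborbi} (\ref{def:suborbi:gen}), the chart $(U_1,\widetilde{U}_1/\Gamma_1,\pi_1)$ meets the algebraic requirements of Definition \ref{def:suborbi} (\ref{def:suborbi:full}); it remains only to arrange that $\pi_1(\widetilde{W})$ be an open neighborhood of $p$ in $\mathcal{P}$.

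This last point is where the real work lies. The inclusion $\pi_1(\widetilde{W})\subset U_1\cap\mathcal{P}$ is immediate from $\pi_2\circ\widetilde{f}=f\circ\pi_1$ together with $\pi_2(\widetilde{V})\subset\mathcal{Q}$. For the reverse inclusion I need $f(U_1)\cap\mathcal{Q}\subset\pi_2(\widetilde{V})$, so that any $q\in U_1\cap\mathcal{P}$ has a preimage landing in $\widetilde{W}$ (again using $\pi_2^{-1}(\pi_2(\widetilde{V}))=\widetilde{V}$). Since $\pi_2(\widetilde{V})$ is open in $\mathcal{Q}$, it equals $O'\cap\mathcal{Q}$ for some open $O'\subset\mathcal{O}_2$ containing $f(p)$, and by continuity of $f$ I can shrink the source chart so that $f(U_1)\subset O'$. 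The one subtlety is to do this while keeping the chart valid: I would fix $\widetilde{p}\in\pi_1^{-1}(p)$ and replace $\widetilde{U}_1$ by a connected $\Gamma_{1,\widetilde{p}}$-invariant neighborhood of $\widetilde{p}$ contained in $\pi_1^{-1}(f^{-1}(O'))$ and disjoint from its translates under $\Gamma_1\setminus\Gamma_{1,\widetilde{p}}$, which yields a new chart whose group is the stabilizer $\Gamma_{1,\widetilde{p}}$. The equivariance and transversality restrict verbatim, the invariance argument above shows $\widetilde{W}\cap\widetilde{U}_1$ is a full $\Gamma_{1,\widetilde{p}}$-submanifold, and now $\pi_1(\widetilde{W}\cap\widetilde{U}_1)=U_1\cap\mathcal{P}$, which is open in $\mathcal{P}$ and contains $p$.

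The main obstacle is therefore not a single deep fact but the bookkeeping of this final paragraph: shrinking the source chart to control $f(U_1)\cap\mathcal{Q}$ while preserving connectedness of $\widetilde{U}_1$ and the equivariance data, and correctly identifying $\pi_1(\widetilde{W})$ with $U_1\cap\mathcal{P}$. Everything else is the classical transverse preimage theorem combined with the elementary observation that preimages of invariant sets under equivariant maps are invariant, and hence full.
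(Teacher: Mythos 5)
Your proof is correct and follows essentially the same route as the paper: the classical transverse preimage theorem applied to $\widetilde{W}=\widetilde{f}^{-1}(\widetilde{V})$, then $\Gamma_1$-invariance of $\widetilde{W}$ via equivariance of $\widetilde{f}$, and fullness by taking $\Delta=\Gamma_1$. The only difference is that your final chart-shrinking step (which you single out as ``the real work'') is unnecessary: Definition \ref{def:suborbi} only requires $\pi_1(\widetilde{W})$ to be \emph{some} open neighborhood of $p$ in $\mathcal{P}$, and the paper simply notes that $\pi_1(\widetilde{W})=f^{-1}(\pi_2(\widetilde{V}))\cap U_1$ (by $\Gamma_2$-invariance of $\widetilde{V}$), which is already open in $\mathcal{P}$ by continuity of $f$, since writing $\pi_2(\widetilde{V})=O^\prime\cap\mathcal{Q}$ with $O^\prime$ open in $\mathcal{O}_2$ gives $f^{-1}(\pi_2(\widetilde{V}))\cap U_1=f^{-1}(O^\prime)\cap U_1\cap\mathcal{P}$.
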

\begin{proof}
  Given $p\in \mathcal{P}$, let $(U_i,\widetilde{U}_i/\Gamma_i,\pi_i)$, $i=1,2$, $\widetilde{V}\subset\widetilde{U}_2$, $\widetilde{f}\co\widetilde{U}_1\to\widetilde{U}_2$ and $\Theta\co\Gamma_1\to\Gamma_2$ be as in Definition \ref{def:transmap}. Since $\widetilde{f}$ is transverse to $\widetilde{V}$ and $\widetilde{V}^\prime:=\widetilde{f}^{-1}(\widetilde{V})$ contains $\pi_1^{-1}(p)$, $\widetilde{V}^\prime$ is a submanifold of $\widetilde{U}_1$ of dimension $\dim\mathcal{O}_1-(\dim\mathcal{O}_2-\dim\mathcal{Q})$ (see \cite{zbMATH03562121}).
  Since $\pi_2(\widetilde{V})$ is an open neighborhood of $f(p)$ in $\mathcal{Q}$, the intersection $f^{-1}(\pi_2(\widetilde{V}))\cap U_1$ is an open neighborhood of $p$ in $\mathcal{P}$. Since $\widetilde{V}$ is $\Gamma_2$-invariant, we easily verify that $\pi_1(\widetilde{V}^\prime)=f^{-1}(\pi_2(\widetilde{V}))\cap U_1$ and that $\widetilde{V}^\prime$ is $\Gamma_1$-invariant. Since $p\in \mathcal{P}$ was arbitrary, we conclude that $\mathcal{P}$ is a full suborbifold of $\mathcal{O}_1$ of dimension $\dim\mathcal{O}_1-(\dim\mathcal{O}_2-\dim\mathcal{Q})$.
\end{proof}

Proposition \ref{proposition:full} implies that an orbifold submersion is transverse to every full suborbifold of its codomain and hence we obtain the following corollary.

\begin{corollary}
  \label{cor:subm}
    Let $f\co \mathcal{O}_1\to\mathcal{O}_2$ be a submersion between orbifolds and let $\mathcal{Q}\subset\mathcal{O}_2$ be a full suborbifold. Then $f^{-1}(\mathcal{Q})$ is either empty or a full suborbifold of $\mathcal{O}_1$ of dimension $\dim\mathcal{O}_1-(\dim\mathcal{O}_2-\dim\mathcal{Q})$.
\end{corollary}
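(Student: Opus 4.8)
The plan is to read the corollary as the special case of Theorem \ref{theorem:preimage} in which the smooth map happens to be a submersion, so the entire task reduces to verifying that a submersion $f\co\mathcal{O}_1\to\mathcal{O}_2$ is transverse, in the precise sense of Definition \ref{def:transmap}, to every full suborbifold $\mathcal{Q}\subset\mathcal{O}_2$; once $f\pitchfork\mathcal{Q}$ is established, Theorem \ref{theorem:preimage} delivers both the conclusion and the stated dimension $\dim\mathcal{O}_1-(\dim\mathcal{O}_2-\dim\mathcal{Q})$ verbatim. Thus, fixing $p\in f^{-1}(\mathcal{Q})$, I would assemble the five ingredients demanded by Definition \ref{def:transmap}. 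The mathematical heart is that Proposition \ref{proposition:full}, via the equivalence with (\ref{proposition:full:2}), hands me exactly the target chart the definition needs: for the full suborbifold $\mathcal{Q}$ there is a chart $(U_2,\widetilde{U}_2/\Gamma_2,\pi_2)$ around $f(p)$ with $\Gamma_2\simeq\Iso(f(p))$ together with a $(\dim\mathcal{Q})$-dimensional \emph{$\Gamma_2$-invariant} submanifold $\widetilde{V}\subset\widetilde{U}_2$ whose image $\pi_2(\widetilde{V})$ is open in $\mathcal{Q}$. It is precisely the fullness hypothesis, through Proposition \ref{proposition:full}, that upgrades the mere invariance under a subgroup $\Delta$ in Definition \ref{def:suborbi} (\ref{def:suborbi:full}) to invariance under the whole group $\Gamma_2\simeq\Iso(f(p))$, which is what the third bullet of Definition \ref{def:transmap} insists upon.

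It then remains to produce a lift $\widetilde{f}$ of $f$ \emph{relative to this particular target chart}, along with a source chart $(U_1,\widetilde{U}_1/\Gamma_1,\pi_1)$ around $p$ (with $f(U_1)\subset U_2$, arranged by shrinking $U_1$) and a homomorphism $\Theta\co\Gamma_1\to\Gamma_2$ satisfying $\widetilde{f}(\gamma x)=\Theta(\gamma)\widetilde{f}(x)$. This is the step I expect to be the main obstacle, because Definition \ref{def:smooth} (\ref{def:smooth:i}) only guarantees a lift with respect to \emph{some} charts around $p$ and $f(p)$, not the specific chart furnished by Proposition \ref{proposition:full}. I would bridge this in the standard way: start from any charts and lift $\widetilde{f}_0$ supplied by smoothness, invoke the compatibility of the two charts of $\mathcal{O}_2$ at $f(p)$ to pass to a common refinement chart with its injections into $\widetilde{U}_2$ and into the codomain of $\widetilde{f}_0$, and transport $\widetilde{f}_0$ along these injections (using the associated equivariant monomorphisms) to re-express it as a lift with values in $\widetilde{U}_2$. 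After composing with a suitable element of $\Gamma_2$ so that a chosen $\widetilde{p}\in\pi_1^{-1}(p)$ maps to a point of $\pi_2^{-1}(f(p))$ and shrinking the charts, the equivariance relation forces the transported homomorphism to land in $\Gamma_2$, producing the required $\Theta$.

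With the lift $\widetilde{f}$ in place relative to the good target chart, the remaining verification is immediate and is where the submersion hypothesis is used: since $f$ has constant rank $\dim\mathcal{O}_2$, the lift $\widetilde{f}$ has rank $\dim\widetilde{U}_2$ at every point and is therefore a submersion onto $\widetilde{U}_2$, and a submersion is transverse to \emph{any} submanifold, in particular to $\widetilde{V}$. Hence all the conditions of Definition \ref{def:transmap} are met at the arbitrary point $p\in f^{-1}(\mathcal{Q})$, so $f\pitchfork\mathcal{Q}$, and Theorem \ref{theorem:preimage} concludes that $f^{-1}(\mathcal{Q})$ is either empty or a full suborbifold of $\mathcal{O}_1$ of dimension $\dim\mathcal{O}_1-(\dim\mathcal{O}_2-\dim\mathcal{Q})$. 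Everything apart from the chart-compatibility transport of the lift is either a direct citation of Proposition \ref{proposition:full} and Theorem \ref{theorem:preimage} or routine shrinking of neighborhoods.
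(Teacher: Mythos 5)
Your proposal is correct and follows essentially the same route as the paper: the paper's entire proof is the remark that Proposition \ref{proposition:full} implies an orbifold submersion is transverse to every full suborbifold of its codomain, after which Theorem \ref{theorem:preimage} applies. You merely spell out the chart-transport details (moving the lift from Definition \ref{def:smooth} to the distinguished chart with $\Gamma_2\simeq\Iso(f(p))$ via compatibility injections) that the paper leaves implicit, and your verification that a lift of a submersion is a submersion, hence transverse to $\widetilde{V}$, matches the intended argument.
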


\begin{example}
  Given orbifolds $\mathcal{O}_1$, $\mathcal{O}_2$ and submersions $f_1\co\mathcal{O}_1\to M$, $f_2\co\mathcal{O}_2\to M$ into a manifold $M$, applying Corollary \ref{cor:subm} to the submersion $f:=f_1\times f_2$ and the diagonal $\mathcal{Q}$ in $M\times M$, we conclude that the fibered product $\mathcal{O}_1\tensor[_{f_1}]{\times}{_{f_2}} \mathcal{O}_2=\{(p_1,p_2)\in\mathcal{O}_1\times\mathcal{O}_2;~f_1(p_1)=f_2(p_2)\}$ is either empty or a full suborbifold of $\mathcal{O}_1\times\mathcal{O}_2$ of dimension $\dim \mathcal{O}_1+\dim\mathcal{O}_2-\dim M$.
\end{example}

Applying Theorem \ref{theorem:preimage} to a full suborbifold given by just a point (see Example \ref{example:suborbis} (\ref{example:suborbis:pt})), we obtain the regular value theorem below. Note that \cite[Theorem 4.2]{zbMATH06100632} looks almost identical but that paper does not impose any compatibility conditions on the charts on a ``suborbifold''. Without a result like our Proposition \ref{proposition:suborbis} it is not clear if the preimage in \cite[Theorem 4.2]{zbMATH06100632} carries an orbifold structure.


\begin{corollary}[Regular value theorem]
  Let $f\co\mathcal{O}_1\to\mathcal{O}_2$ be a smooth map between orbifolds and let $q\in f(\mathcal{O}_1)$ such that $\rk f(p)=\dim\mathcal{O}_2$ for every $p\in f^{-1}(q)$. Then $f^{-1}(q)$ is a full suborbifold of $\mathcal{O}_1$ of dimension $\dim\mathcal{O}_1-\dim\mathcal{O}_2$.
\end{corollary}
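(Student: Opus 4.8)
The plan is to recognize the singleton $\{q\}$ as a zero-dimensional full suborbifold of $\mathcal{O}_2$ (Example \ref{example:suborbis} (\ref{example:suborbis:pt})) and to apply Theorem \ref{theorem:preimage} with $\mathcal{Q}=\{q\}$. Since $\dim\mathcal{Q}=0$, that theorem yields that $f^{-1}(q)$ is empty or a full suborbifold of dimension $\dim\mathcal{O}_1-(\dim\mathcal{O}_2-0)=\dim\mathcal{O}_1-\dim\mathcal{O}_2$, and the hypothesis $q\in f(\mathcal{O}_1)$ rules out the empty case. Everything therefore reduces to verifying that the rank condition forces $f\pitchfork\{q\}$ in the sense of Definition \ref{def:transmap}.

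To check transversality, I would fix $p\in f^{-1}(q)$ and use Definition \ref{def:smooth} to obtain charts $(U_1,\widetilde{U}_1/\Gamma_1,\pi_1)$ around $p$ and $(U_2,\widetilde{U}_2/\Gamma_2,\pi_2)$ around $q$, a lift $\widetilde{f}\co\widetilde{U}_1\to\widetilde{U}_2$ and a homomorphism $\Theta\co\Gamma_1\to\Gamma_2$ with $\pi_2\circ\widetilde{f}=f\circ\pi_1$ and $\widetilde{f}(\gamma x)=\Theta(\gamma)\widetilde{f}(x)$. Fixing $\widetilde{p}\in\pi_1^{-1}(p)$ and setting $\widetilde{q}:=\widetilde{f}(\widetilde{p})$ (so that $\pi_2(\widetilde{q})=q$), I would then shrink the target chart to a reduced one centered at $\widetilde{q}$: choose a $(\Gamma_2)_{\widetilde{q}}$-invariant neighborhood of $\widetilde{q}$ disjoint from its translates under $\Gamma_2\setminus(\Gamma_2)_{\widetilde{q}}$ and replace $\Gamma_2$ by $(\Gamma_2)_{\widetilde{q}}\simeq\Iso(q)$. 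Since $\gamma\widetilde{p}=\widetilde{p}$ implies $\Theta(\gamma)\widetilde{q}=\widetilde{f}(\gamma\widetilde{p})=\widetilde{q}$, the map $\Theta$ sends $(\Gamma_1)_{\widetilde{p}}$ into $(\Gamma_2)_{\widetilde{q}}$, so after correspondingly reducing the source chart at $\widetilde{p}$ and shrinking $\widetilde{U}_1$ so that $\widetilde{f}(\widetilde{U}_1)\subset\widetilde{U}_2$ (whence $f(U_1)\subset U_2$), the restricted data $(\widetilde{f},\Theta)$ remain a valid lift. With these reduced charts, $\widetilde{V}:=\{\widetilde{q}\}$ is a $\Gamma_2$-invariant zero-dimensional submanifold with $\pi_2(\widetilde{V})=\{q\}$, an open neighborhood of $q$ in $\mathcal{Q}$.

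The crux is then the transversality of $\widetilde{f}$ to the point $\widetilde{V}=\{\widetilde{q}\}$. Because $T_{\widetilde{q}}\widetilde{V}=0$, this just means $d\widetilde{f}_x$ is surjective at every $x\in\widetilde{f}^{-1}(\widetilde{q})$. Any such $x$ satisfies $f(\pi_1(x))=\pi_2(\widetilde{f}(x))=\pi_2(\widetilde{q})=q$, so $\pi_1(x)\in f^{-1}(q)$; by hypothesis $\rk f(\pi_1(x))=\dim\mathcal{O}_2$, and since the rank is computed by any lift at any preimage point (the remark after Definition \ref{def:smooth}), $\widetilde{f}$ has rank $\dim\mathcal{O}_2=\dim\widetilde{U}_2$ at $x$, i.e.\ $d\widetilde{f}_x$ is onto. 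Hence $\widetilde{f}\pitchfork\widetilde{V}$, all the data of Definition \ref{def:transmap} are in place, and $f\pitchfork\{q\}$.

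I expect the main obstacle to be the chart bookkeeping in the middle step: one must produce a reduced target chart with $\Gamma_2\simeq\Iso(q)$ together with a compatible lift, and verify that $\Theta$ restricts appropriately to the isotropy groups. The geometric content — that transversality to a point is submersivity along its preimage, which the rank hypothesis supplies — is immediate once $\widetilde{f}^{-1}(\widetilde{q})$ is correctly identified with lifts of points of $f^{-1}(q)$.
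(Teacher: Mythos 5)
Your proposal is correct and takes essentially the same route as the paper: the paper also obtains this corollary by applying Theorem \ref{theorem:preimage} to the zero-dimensional full suborbifold $\{q\}$ from Example \ref{example:suborbis} (\ref{example:suborbis:pt}), with the nonemptiness coming from $q\in f(\mathcal{O}_1)$. The chart-level verification that the rank hypothesis yields $f\pitchfork\{q\}$ --- reducing the target chart so that $\Gamma_2\simeq\Iso(q)$, restricting $\Theta$ to the isotropy groups, and identifying transversality to a point with surjectivity of $d\widetilde{f}$ along $\widetilde{f}^{-1}(\widetilde{q})$ --- is left implicit in the paper, and your detailed bookkeeping of it is sound.
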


\section*{Acknowledgements}
We would like to thank Francisco J. Gozzi for various helpful discussions on Section 2 and for his idea to use reference \cite{claudio} in the proof of Proposition \ref{proposition:suborbis}. We also thank Dorothee Sch\"uth, Marcos Alexandrino and Matias del Hoyo for suggesting various improvements to earlier drafts of this article. We would also like to thank the referee for thorough proofreading and various helpful suggestions.

\bibliographystyle{elsarticle-num}


\end{document}